\theoremstyle{plain}
   \newtheorem{theorem}{Theorem}[section]
   \newtheorem{proposition}[theorem]{Proposition}
   \newtheorem{prop}[theorem]{Proposition}
   \newtheorem{corollary}[theorem]{Corollary}
   \newtheorem{conjecture}[theorem]{Conjecture}
   \newtheorem*{theorem*}{Theorem}
\theoremstyle{definition}
   \newtheorem{definition}[theorem]{Definition}
   \newtheorem{example}[theorem]{Example}
   \newtheorem{question}[theorem]{Question}
   \newtheorem{remark}[theorem]{Remark}
\numberwithin{equation}{section}
\newcommand\Symm{\mathfrak{S}}
\newcommand{\defeq}{\overset{\text{def}}{=\hspace{-4pt}=}}
\newcommand{\id}{\mathrm{id}}
\newtheoremstyle{TheoremNum}
        {}{}                            
        {\itshape}                      
        {}                              
        {\bfseries}                     
        {.}                             
        { }                             
        {\thmname{#1}\thmnote{ \bfseries #3}}
    \theoremstyle{TheoremNum}
\newcommand*{\rom}[1]{\expandafter\@slowromancap\romannumeral #1@}
\begin{document}

\title[Fully commutative elements in complex reflection groups]{A note on fully commutative elements in \\complex reflection groups}
\author{Jiayuan Wang}
\address{Department of Mathematics \\ George Washington University \\ Washington, DC, USA 20052}
\email{j453w588@gwu.edu}

\maketitle

\begin{abstract}
Fully commutative elements in types $B$ and $D$ are completely characterized and counted by Stembridge. Recently, Feinberg-Kim-Lee-Oh have extended the study of fully commutative elements from Coxeter groups to the complex setting, giving an enumeration of such elements in $G(m,1,n)$. In this note, we prove a connection between fully commutative elements in $B_n$ and in $G(m,1,n)$, which allows us to characterize fully commutative elements in $G(m,1,n )$ by pattern avoidance. Further, we present a counting formula for such elements in $G(m,1,n)$. 

\end{abstract}

\section{Introduction}
\label{sec:introduction}
Let $G$ be the group generated by a set of elements $\{s_1, s_2, \ldots\}$. Assume $g\in G$ has a reduced expression $g=s_{i_1}s_{i_2}\cdots s_{i_{\ell(g)}}$ where \emph{length} $\ell(g)$ is the minimum number of generators needed. If any other reduced expression of $g$ can be obtained from $s_{i_1}s_{i_2}\cdots s_{i_{\ell(g)}}$ by interchanges of adjacent commuting generators, i.e., commutation relations, then $g$ is \emph{fully commutative}. 

Consider the symmetric group $\Symm_n$ generated by simple transpositions. Fully commutative elements are permutations that avoid the pattern $321$ \cite{BJS}. The number of such permutations is the $n$-th Catalan number. When $G$ is a Weyl group, fully commutative elements can be characterized by root system \cite{BP05} and Lusztig's $a$-function \cite{BF98, Shi05b}. When $G$ is a simply laced Coxeter group, fully commutative elements can also be described by root system \cite{FS97}. When $G$ is the Coxeter group $B_n$ or $D_n$, fully commutative elements can be viewed as the linear extensions of a heap \cite{S96} and then be characterized by pattern avoidance \cite{S97}. We will discuss Stembridge's work on pattern avoidance more extensively in Section \ref{sec:background}.

Recently, \cite{FKLO} generalized the study of fully commutative elements from Coxeter groups to the complex setting, proving a counting formula for these elements that agrees with \cite{S96} in $B_n$. The purpose of this note is to further explore the connection between fully commutative elements in $B_n$ and in $G(m,1,n)$. We will show that an element $g$ in $G(m,1,n)$ is fully commutative if and only if the resulting element, after every nontrivial $m$-th root of unity in $g$ is replaced with $-1$, is fully commutative in $B_n$.\footnote{After posting this note to arXiv, we were made aware that an equivalent result was independently obtained by Thomas Magnuson in his undergraduate honors thesis \cite{Mag}. We thank Tianyuan Xu for bringing this to our attention.} This implies that the pattern avoidance of fully commutative elements in $B_n$ extends to $G(m,1,n)$. As a consequence, we will prove that the enumeration of fully commutative elements in $G(m,1,n)$ can be achieved by counting fully commutative elements in $B_n$ by the number of $-1$'s in their matrix forms. Further, we will discuss why fully commutative elements in $G(m,m,n)$ do not have the pattern avoidance property.

The remainder of this note is as follows: in Section \ref{sec:background}, we give relevant background on Coxeter groups and complex reflection groups, and introduce Stembridge's work in $B_n$ and the result by Feinberg-Kim-Lee-Oh in $G(m,1,n)$. In Section \ref{sec:main}, we prove our main theorem, on the connection between fully commutative elements in $B_n$ and in $G(m,1,n)$.  Then in Section \ref{sec:open}, we consider fully commutative elements in groups $G(m,m,n)$ with different generating sets and Shephard groups. We present some preliminary data and propose a few conjectures and questions for future directions. 
\subsection*{Acknowledgements}
The author thanks Kyu-Hwan Lee for giving a talk in Sage Days FPSAC 2019 that introduced the idea and is deeply grateful to Joel Brewster Lewis for many insightful discussions and useful comments at all stages of this project, and Alejandro Morales and Theo Douvropoulos for helpful conversations.

\section{Background}
\label{sec:background}

\subsection{Coxeter groups}
For a thorough treatment on Coxeter groups, see \cite{BB,JH}. Let $W$ be a group with a set of generators $S\subseteq W$, subject only to relations of the form 
\[ (ss')^{m(s,s')} =1,\]
where $m(s,s)=1$, $m(s,s')=m(s',s)\geq 2$ for $s\neq s'$ in $S$, with the convention that $m(s, s')=\infty$ when no relation occurs for a pair $s, s'$. Then $W$ is a \emph{Coxeter group}. 

One key example is the symmetric group $\Symm_n$, which can be realized as permutations. The hyperoctahedral group $B_n$ of all \emph{signed permutations} of the set $\{1, 2, \ldots, n\}$ (permutations of the set $\{-n, \ldots, -1, 1, \ldots, n\}$)  is another example. Every element in $B_n$ is a \emph{monomial matrix}, i.e., in each row and column, there is exactly one nonzero entry, whose nonzero entries are either $1$ or $-1$. The group $B_n$ has a subgroup $D_n$, whose elements are monomial matrices that have an even number of $-1$'s. Thus, $D_n$ is called the group of \emph{even-signed permutations}, which is also a Coxeter group. In addition, the dihedral group $I_2(m)$ is a Coxeter group. 

One may notice that these examples are all \emph{finite real reflection groups}: a finite group generated by \emph{real reflections}, which are linear transformations that fix a hyperplane in a (real) Euclidean space. In fact, finite real reflection groups are the finite Coxeter groups. 

\subsection{Complex reflection groups}
For a general reference on complex reflection groups, see \cite{LehrerTaylor}. Given a finite-dimensional complex vector space $V$, a \emph{reflection} is a linear transformation $t: V \rightarrow V$ whose fixed space $\operatorname{ker}(t-1)$ is a hyperplane (has codimension $1$), and a finite subgroup $G$ of $GL(V)$ is called a \emph{complex reflection group} if $G$ is generated by its subset of reflections. Complex reflection groups were classified by Shephard and Todd~\cite{ST54}: every complex reflection group is a direct product of irreducibles, and every irreducible is isomorphic to a group of the form
\[
G(m, p, n) \defeq \left\{\begin{array}{l} n \times n \text{ monomial matrices whose nonzero entries are}\\m\text{th} \text{ roots of unity with product a } \frac{m}{p}\text{th} \text{ root of unity}\end{array} \right\}
\]
for positive integers $m$, $p$, $n$ with $p \mid m$ or to one of $34$ exceptional examples, denoted as $G_4$, \ldots, $G_37$.

For every $m$, $p$, $n$, there is a projection map
\[ f: G(m,p,n) \twoheadrightarrow G(1,1,n) = \Symm_n\] 
where $f(g)$ is the result of replacing every root of unity in the matrix of $g$ with $1$. The resulting permutation $f(g)$ is the \emph{underlying permutation} of $g$. We may use a shorthand to refer to an element $g\in G(m,p, n)$ as $[f(g); (a_1, \ldots, a_n)]$, where $f(g)$ is the underlying permutation of $g$ and $a_i$ is the exponent of the nonzero entry in the $i$-th column. We call $a_i$ the \emph{weight} of the entry and $a_1 +\ldots +a_n \pmod{m}$ the \emph{weight} of the element. For example, in $G(30, 1, 6)$, we have
\begin{equation*}
g=\left[\begin{smallmatrix}
&\omega^{17} &&&&\\
1&&&&&\\
&&&&\omega^{2}&\\
&&\omega^{2}&&&\\
&&&\omega^{3}&&\\
&&&&&\omega^{6}
\end{smallmatrix}\right]
= [214536 ; (0, 17, 2, 3, 2, 6)] = 
[(12)(345)(6) ; (0, 17, 2, 3, 2, 6)],
\end{equation*}
where $\omega=\exp(\frac{2\pi i}{m})$ denotes a fixed primitive $m$th root of unity.

Let $s_1, s_2, \ldots, s_{n-1}$ be simple transpositions, i.e., $s_j = [(j\, j+1); (0,\ldots, 0)]$ and $s_0 = [\id; (1, 0, \ldots, 0)]$ be a diagonal reflection.
When $p =1$, the group $G(m, 1, n)$ can be generated by reflections $s_0, s_1, \ldots, s_{n-1}$. The simple transpositions $s_j$'s have order $2$ and the diagonal reflection $s_0$ has order $n$. 

One recovers the infinite families of real reflection groups as the following special cases: the group $G(1, 1, n)$ is the symmetric group $\Symm_n$; $G(2, 1, n)$ is the signed permutation group $B_n$; $G(2, 2, n)$ is the even-signed permutation group $D_n$; and $G(m, m, 2)$ is the dihedral group $I_2(m)$.

\subsection{Fully commutative elements in Coxeter group $B_n$}
Let $s_1, s_2, \ldots, s_{n-1}$ be simple transpositions, i.e., $s_j = [(j\, j+1); (0,\ldots, 0)]$ and $s_0 = [\id; (1, 0, \ldots, 0)]$ be a diagonal reflection. Then Coxeter group $B_n$ can be generated by $s_0, s_1, \ldots, s_{n-1}$ with defining relations:
\begin{center}
\begin{tabular}{rll}
$(s_0s_1)^4= s_i^2$ & $=1$ & for $1\leq i\leq n-1$,\\
$s_i s_j$ & $= s_j s_i $ & for $i+1 < j\leq n-1$, \\
$s_{i+1}s_i s_{i+1}$ & $ = s_i s_{i+1}s_i$ & for $1\leq i \leq n-2$.\\
\end{tabular}
\end{center}

The shortest left coset representatives for $B_n/B_{n-1}$ are 
\[\{1,\quad s_{n-1}, \quad s_{n-2}s_{n-1}, \quad \ldots,\quad s_0 s_1\cdots s_{n-1}, \quad s_1s_0s_1\cdots s_{n-1}, \quad \ldots, \quad s_{n-1}\ldots s_1s_0 s_1\ldots s_{n-1}\}\]
where shortest means minimal. For $0\leq i\leq j$, denote $[i,j]=s_i\cdot s_{i+1}\cdots s_j$ and $[-i, j]=s_i\cdot s_{i-1}\cdots s_1\cdot s_0\cdot s_1\cdots s_j$. Then rewrite the coset representatives as 
\[\{1,\quad [n-1,n-1] \quad [n-2,n-1], \quad \ldots,\quad [0,n-1], \quad [-1, n-1], \quad \ldots, \quad [-(n-1),n-1]\}.\] 

Using these coset representatives, Stembridge showed in \cite{S97} that every element in $B_n$ has a canonical reduced word
\[
[m_1, n_1]\cdot [m_2, n_2]\cdots [m_r, n_r],
\]
where $n>n_1>\cdots>n_r\geq 0$ and $|m_i|\leq n_i$. 

Further, he proved several equivalent statements regarding full commutativity in $B_n$. In particular, he characterized fully commutative elements by pattern avoidance. Let $g\in B_n$. If g avoids the pattern $(-1,-2)$, it means that $g$ does not contain $\left[\begin{smallmatrix} -1&\\ &-1\end{smallmatrix}\right]$ as a submatrix. Similarly, if $g$ avoids the pattern $(2, 1, -3)$, it means that $g$ does not contain $\left[\begin{smallmatrix} &1&\\ 1&&\\ &&-1\end{smallmatrix}\right]$ as a submatrix. 
\begin{theorem}[{\cite[Cor.~5.6]{S97}}]\label{thrm: type B}
For $g\in B_{n}$, the following are equivalent.
\begin{enumerate}
\item $g$ is fully commutative.
\item In the canonical reduced word $[m_1, n_1]\cdots [m_r, n_r]$ for $g$, we have either
\begin{enumerate}
\item $m_1>\cdots>m_s >m_{s+1}=\cdots=m_r = 0$ for some $s\leq r$, or
\item $m_1> \cdots > m_{r-1}> -m_r >0$.
\end{enumerate}
\item $g$ avoids the pattern $(-1, -2)$ and all patterns $(a, b,c)$ such that $|a|>b>c$ or $-b>|a|>c$.
\end{enumerate}
\end{theorem}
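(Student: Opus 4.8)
The plan is to prove the two equivalences $(1)\Leftrightarrow(2)$ and $(2)\Leftrightarrow(3)$, using the canonical reduced word $[m_1,n_1]\cdots[m_r,n_r]$ (whose existence and reducedness we take as given, following \cite{S97}) as the bridge between the word-combinatorial condition $(2)$ and the matrix/pattern condition $(3)$.

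For $(1)\Leftrightarrow(2)$ I would invoke the standard criterion for full commutativity: $g$ is fully commutative if and only if no reduced word for $g$ contains, as a consecutive factor, the length-$m(s,t)$ alternating word in a pair $s,t$ of generators with $m(s,t)\geq 3$ --- here only the braid $s_{i+1}s_is_{i+1}$ ($1\leq i\leq n-2$) and the length-four factor $s_0s_1s_0s_1$ are relevant --- together with the fact (Matsumoto--Tits) that every reduced word for $g$ is obtained from the canonical one by commutation and braid moves. Thus it suffices to track how such moves act on $[m_1,n_1]\cdots[m_r,n_r]$. Because $n>n_1>\cdots>n_r\geq 0$, two consecutive blocks share only an initial run of generators (reading a block $[-k,n]$ as beginning $s_ks_{k-1}\cdots s_1$), so the interaction is local to a pair --- occasionally a triple --- of consecutive blocks, and I would run through these local configurations: if the strict decrease $m_1>m_2>\cdots$ required among the nonnegative $m_i$ fails (say $m_{i+1}\geq m_i$ with $m_{i+1}>0$), sliding $[m_{i+1},n_{i+1}]$ leftward past the tail of $[m_i,n_i]$ exposes a type-$A$ braid; if $m_i<0$ for some $i<r$, so that two sign-carrying blocks coexist in the wrong order, one likewise exposes either a type-$A$ braid or the factor $s_0s_1s_0s_1$; and in the two configurations described by $(2)$ one checks that every commutation or braid move keeps the word in a controlled normal form, so that no forbidden factor is ever produced. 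This yields $(1)\Leftrightarrow(2)$.

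For $(2)\Leftrightarrow(3)$ I would read off the signed permutation $g=[m_1,n_1]\cdots[m_r,n_r]$ explicitly from the data $(m_i,n_i)$. The block $[m,n]=s_ms_{m+1}\cdots s_n$ with $m\geq 0$ contributes no sign change, while $[-k,n]=s_ks_{k-1}\cdots s_1s_0s_1\cdots s_n$ contributes exactly one occurrence of $s_0$; since the number of $s_0$'s in a reduced word equals the number of negative entries of the signed permutation, the negative entries of $g$ are controlled precisely by the blocks with $m_i\leq 0$, and because the $n_i$ are strictly decreasing these blocks are nested. A direct computation then shows that in the first alternative of $(2)$ the (at most $r-s$) negative entries of $g$ occupy strictly decreasing rows as the column index increases --- here one uses that the positive part $[m_1,n_1]\cdots[m_s,n_s]$, constrained by $m_1>\cdots>m_s>0$ and $n_1>\cdots>n_s$, does not disturb this order --- so $g$ avoids $(-1,-2)$, and a finite check of the three-element subconfigurations rules out $|a|>b>c$ and $-b>|a|>c$; the second alternative of $(2)$ is similar, with a single $s_0$ (hence a single negative entry) and strictly decreasing positive $m_i$. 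Conversely, if $(2)$ fails then two of the $m_i$ violate the prescribed order, and reading off the corresponding rows and columns of $g$ produces either two negative entries with increasing rows (the pattern $(-1,-2)$) or three entries realizing $|a|>b>c$ or $-b>|a|>c$; this gives $\neg(2)\Rightarrow\neg(3)$ and completes the equivalence. One could instead phrase all of $(2)\Leftrightarrow(3)$ through the heap of the canonical word, matching the forbidden patterns with the minimal non-convex sub-heaps, which is essentially the route of \cite{S96,S97}.

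The step I expect to be the main obstacle is the implication $(2)\Rightarrow(1)$: one must show that \emph{no} sequence of commutation and braid moves applied to a canonical word satisfying the conditions in $(2)$ ever produces a forbidden factor, which requires an exhaustive (if elementary) analysis of how triples of consecutive blocks evolve under the moves, and the danger is simply overlooking a move sequence. By comparison, the matrix bookkeeping underlying $(2)\Leftrightarrow(3)$ is routine, though some care is needed with sign conventions and with the flattening implicit in pattern containment.
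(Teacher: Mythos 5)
This statement is quoted from Stembridge \cite[Cor.~5.6]{S97} and the paper gives no proof of it, so there is no in-paper argument to compare against; the closest analogue is Proposition 3.5, where the author proves the $(1)\Leftrightarrow(2)$ part in the generalized setting of $G(m,1,n)$ (which specializes to $B_n$ at $m=2$). Your plan for $(1)\Leftrightarrow(2)$ matches that proof in structure: invoke the forbidden-factor criterion for full commutativity (only $s_is_{i+1}s_i$, $s_{i+1}s_is_{i+1}$ and the alternating length-four word in $s_0,s_1$ matter), then analyze how such a factor can straddle one, two, or three consecutive blocks of the canonical word. One concrete omission in your enumeration of failure modes: condition (2)(b) also excludes a \emph{single} negative block that is too large, i.e.\ $m_r<0$ with $-m_r\geq m_{r-1}$. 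This is neither ``two sign-carrying blocks coexisting'' nor a failure of strict decrease among the positive $m_i$ (your stated trigger is $m_{i+1}\geq m_i$ with $m_{i+1}>0$), yet it must also expose a type-$A$ braid; the paper's Proposition 3.5 covers it by phrasing the requirement as $|m_{i+1}|<|m_i|$ or $m_i=m_{i+1}=0$. Also, in the direction $(2)\Rightarrow(1)$ you only need to show that no word in the \emph{commutation} class of the canonical word contains a forbidden factor; speaking of ``every commutation or braid move'' muddies what is actually being checked.

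The more serious gap is $(2)\Leftrightarrow(3)$, which is where the substance of Stembridge's corollary lives and which your proposal reduces to an unexecuted ``direct computation'' and ``finite check.'' To carry it out one must actually compute the signed permutation represented by $[m_1,n_1]\cdots[m_r,n_r]$ in one-line form from the data $(m_i,n_i)$, then show (i) that both alternatives of $(2)$ force avoidance of $(-1,-2)$ and of every triple with $|a|>b>c$ or $-b>|a|>c$, and (ii) that \emph{each} of the many ways $(2)$ can fail (e.g.\ $m_i<0$ for $i<r$; $0=m_i<m_{i+1}$; $m_i=m_{i+1}>0$; $-m_r\geq m_{r-1}$) produces at least one forbidden pattern. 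Neither verification is present, and the second in particular is not routine --- Stembridge derives it through a chain of intermediate characterizations (heaps and his Theorem 5.1ff.) rather than by direct inspection. As written, both directions of $(2)\Leftrightarrow(3)$ are assertions of what the computation ``then shows,'' so the proposal is a sound outline of the right strategy rather than a proof.
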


For a complete list of patterns in case $(3)$ in Theorem \ref{thrm: type B}, see Table \ref{patterns}. 
 
\begin{example}
Consider $B_4$ with generating set $\{s_0, s_1, s_2, s_3\}$ where $s_0 = [\id; (1, 0, 0, 0)]$, $s_1 = [(12);(0,\ldots, 0)]$, $s_2 = [(23); (0, \ldots, 0)]$, and $s_3 = [(34);(0, \ldots, 0)]$. Let $g_1 = \left[\begin{smallmatrix} 1&&&\\ &&&1\\&&-1&\\ &1&&\end{smallmatrix}\right]=[(24);(0,0,1,0)]=[2,3]\cdot [-1, 2]=(s_2 s_3) \cdot ( s_1 s_0 s_1 s_2)$. Here are all its reduced expressions:
 \[s_2 {\color{red} s_3} s_1 s_0 s_1 s_2, \,\quad
  s_2 s_1 {\color{red} s_3}s_0 s_1 s_2,\, \quad
  s_2 s_1 s_0 {\color{red} s_3}s_1s_2,\,\quad \text{ and } \quad
  s_2 s_1 s_0 s_1 {\color{red} s_3} s_2.\]
Observe that any reduced expression of $g_1$ can be obtained from another via commutation relations. Then $g_1$ is fully commutative. Also, its canonical reduced word is of case $(b)$ in Theorem \ref{thrm: type B}. 
 
Element $g_2 =[(1342);(0,0,1,1)]=[-2,3]\cdot [1,2]\cdot [-1,1]=(s_2 s_1 s_0 s_1 s_2 s_3)\cdot (s_1 s_2) \cdot (s_1 s_0 s_1)$ is not fully commutative, since its canonical reduced word is neither case $(a)$ nor $(b)$. Observe that it has a reduced expression $s_2 s_1 s_0 s_1 s_2 s_3 {\color{red} s_2 s_1 s_2} s_0 s_1$ that is not equivalent to the first one via commutation relations. Also,   $g_2 =\left[\begin{smallmatrix} &{\color{blue}1}&&\\ &&&-1\\ {\color{blue}1}&&&\\ &&{\color{blue}-1}&\end{smallmatrix}\right]$ has the pattern $(2,1,-3)$, which is one of the patterns $(a,b,c)$ where $|a|>b>c$ that fully commutative elements in $B_n$ do not have. 
 \end{example}
  
Interpreting canonical reduced words as certain plane partitions, Stembridge gave the following counting formula for fully commutative elements in $B_n$. 

 \begin{prop}[{\cite[Prop.~5.9]{S97}}]\label{prop: type B}
 In $B_n$, there are $(n+2)C_{n} -1$ fully commutative elements, where $C_n = \frac{1}{n+1} {2n \choose n}$ is the $n$th Catalan number. 
\end{prop}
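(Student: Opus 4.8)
The plan is to reduce the enumeration to the combinatorics of canonical reduced words, using part~(2) of Theorem~\ref{thrm: type B}. Every element of $B_n$ has a unique canonical reduced word $[m_1,n_1]\cdots[m_r,n_r]$ with $n>n_1>\cdots>n_r\ge 0$ and $|m_i|\le n_i$, and conversely every such sequence arises (a standard consequence of the iterated coset decomposition of $B_n$ over $B_{n-1}$). Hence the fully commutative elements correspond bijectively to the sequences $((m_1,n_1),\dots,(m_r,n_r))$ of this form satisfying condition (a) or condition (b). Since (a) forces every $m_i\ge 0$ while (b) forces $m_r<0$, the two families are disjoint, so I would enumerate each separately and add.

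For the type-(b) words, the condition $m_1>\cdots>m_{r-1}>-m_r>0$ says exactly that $(m_1,\dots,m_{r-1},-m_r)$ is a strictly decreasing sequence of positive integers dominated entrywise by $(n_1,\dots,n_r)$; replacing $m_r$ by $-m_r$ therefore identifies the type-(b) words with the canonical reduced words of the \emph{non-identity} fully commutative elements of the parabolic subgroup $\Symm_n=\langle s_1,\dots,s_{n-1}\rangle$ (an element lies in $\Symm_n$ and is fully commutative there precisely when its canonical word uses no $s_0$, i.e.\ all $m_i\ge 1$, and satisfies (a) with $s=r$). By the $321$-avoidance count of fully commutative elements in $\Symm_n$, the type-(b) words contribute $C_n-1$, and the type-(a) words with $s=r$ are exactly the canonical words of all fully commutative elements of $\Symm_n$, contributing $C_n$. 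Thus it suffices to show that the entire type-(a) family has size $(n+1)C_n$, since then $(n+1)C_n+(C_n-1)=(n+2)C_n-1$.

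For the remaining count I would follow Stembridge and pass to plane partitions. Record a type-(a) word as the two-rowed array with top row $n_1>\cdots>n_r$ (strictly decreasing, entries in $\{0,\dots,n-1\}$) and bottom row $m_1\ge\cdots\ge m_r\ge 0$ (weakly decreasing, with $0$ the only repeated value), subject to $m_i\le n_i$ in each column; after subtracting the staircase $r-i$ from both rows this becomes an honest plane partition in a trapezoidal region, whose number is given by a product formula. Equivalently, one can encode the data as a pair of non-crossing lattice paths and evaluate a $2\times 2$ determinant of binomial coefficients via Lindström--Gessel--Viennot, or split off the block of trailing zeros in the bottom row and sum a telescoping series of ballot numbers. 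I expect this last enumeration to be the main obstacle: one must correctly translate the constraint coupling the two rows (the smallest $n_i$ lying above a positive $m_i$ must exceed the largest $n_i$ lying above a zero) and carefully track the degenerate configurations (the empty word, $s=0$, $s=r$, and $r=1$ in case (b)) so that the total collapses to the clean value $(n+1)C_n$.
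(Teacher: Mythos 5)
First, a point of comparison: the paper does not actually prove this proposition; it is quoted from \cite[Prop.~5.9]{S97}, with the remark that Stembridge obtains it by interpreting canonical reduced words as plane partitions. That is the route you set out on, and your reduction is correct as far as it goes. The canonical words $[m_1,n_1]\cdots[m_r,n_r]$ with $n>n_1>\cdots>n_r\geq 0$ and $|m_i|\leq n_i$ do biject with the elements of $B_n$, Theorem~\ref{thrm: type B}(2) splits the fully commutative ones into the disjoint families (a) (all $m_i\geq 0$) and (b) ($m_r<0$), and your count of family (b) is clean and correct: flipping the sign of $m_r$ identifies these words with the all-positive strictly decreasing bottom rows, i.e.\ with the canonical words of the non-identity fully commutative elements of $\Symm_n$, of which there are $C_n-1$.

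The gap is exactly where you flag it: you never establish that family (a) has $(n+1)C_n$ members, and that is the substance of the proposition. The target value is right (one can check $n=1,2,3$ directly), but none of the three strategies you name --- trapezoidal plane partitions, a Lindstr\"om--Gessel--Viennot determinant, or a telescoping sum of ballot numbers --- is actually carried out, and the step is not routine. To see that something real remains, note that the natural factorization of a type-(a) word as (the canonical word $[m_1,n_1]\cdots[m_s,n_s]$ of a fully commutative $w\in\Symm_n$) followed by a trailing block $[0,n_{s+1}]\cdots[0,n_r]$ shows that the family-(a) count equals $\sum_{w} 2^{d(w)}$, where the sum runs over fully commutative $w\in\Symm_n$, $d(e)=n$, and otherwise $d(w)=n_s$ is the top index of the last block of the canonical word of $w$ (the trailing zero blocks are an arbitrary subset of $\{0,\ldots,d(w)-1\}$). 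That this weighted sum collapses to $(n+1)C_n$ is a genuine Catalan-type identity needing its own proof; it does not follow from the unweighted count $C_n$ of fully commutative permutations. Until one of your proposed encodings is pushed through to a closed form, the argument is incomplete at its central step.
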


\subsection{Fully commutative elements in $G(m,1,n)$}
Let $s_1, s_2, \ldots, s_{n-1}$ be simple transpositions, i.e., $s_j = [(j\, j+1); (0,\ldots, 0)]$ and $s_0 = [\id; (1, 0, \ldots, 0)]$ be a diagonal reflection. The group $G(m,1,n)$ can be generated by $s_0, s_1, s_2, \ldots, s_{n-1}$ with defining relations:
\begin{center}
\begin{tabular}{rll}
$s_0^m = s_i^2$ & $=1$ & for $1\leq i\leq n-1$,\\
$s_i s_j$ & $= s_j s_i $ & for $i+1 < j\leq n-1$, \\
$s_{i+1}s_i s_{i+1}$ & $ = s_i s_{i+1}s_i$ & for $1\leq i \leq n-2$,\\
$s_1 s_0 s_1 s_0 $ & $= s_0 s_1 s_0 s_1$.&
\end{tabular}
\end{center}

\begin{example}
Consider $G(3,1,3)$ with generating set $\{s_0, s_1, s_2\}$ where $s_0 =[\id; (1,0,0)]$, $s_1=[(12);(0,0,0)]$ and $s_2=[(23);(0,0,0)]$. Element $[\id; (2,1,0)]$ is not fully commutative since it has two reduced expressions $s_1 s_0 s_1 (s_0)^2 $ and $(s_0)^2 s_1 s_0 s_1$, where neither has adjacent commuting generators. So one cannot be obtained from the other through commutation relations, making this element not fully commutative. Element $[(13); (1,1,1)]$ is fully commutative with reduced expressions $s_0 s_1 {\color{red} s_0 s_2 }s_1 s_0=s_0 s_1 {\color{red} s_2 s_0 }s_1 s_0$.
\end{example}

By focusing on certain prefixes and suffixes of reduced expressions, \cite{FKLO}\footnote{When comparing the present work with \cite{FKLO}, the reader should note that we follow Stembridge's choice of generators, which are different from the ones used in \cite{FKLO}.} showed the following counting formula that agrees with Proposition~\ref{prop: type B} when $m=2$.
\begin{theorem}[{\cite[Cor.~4.12]{FKLO}}]\label{thrm: FKLO}
For $n\geq 3$, the number of fully commutative elements in $G(m,1,n)$ is equal to 
\[
 m(m-1)\sum\limits_{s=0}^{n-2} \frac{(n+s)! (n-s+1)}{s!(n+1)!} m^{n-2-s} + (2m-1)C_n -(m-1).
\]
\end{theorem}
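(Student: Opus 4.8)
The plan is to derive Theorem~\ref{thrm: FKLO} from the correspondence between fully commutative elements of $G(m,1,n)$ and of $B_n$ described in the introduction, which turns the problem into a weighted enumeration in $B_n$. Let $\pi\colon G(m,1,n)\to B_n$ be the map that replaces each nontrivial $m$th root of unity in a monomial matrix by $-1$ and fixes the entries equal to $1$; it preserves the underlying permutation and the set of positions carrying a nontrivial entry. Because $p=1$ imposes no condition on the product of the nonzero entries, the fiber of $\pi$ over $h\in B_n$ has exactly $(m-1)^{k(h)}$ elements, where $k(h)$ is the number of entries of $h$ equal to $-1$: each of the $k(h)$ positions at which $h$ has a $-1$ may be independently filled by any one of the $m-1$ nontrivial $m$th roots of unity, and the rest of the preimage is forced. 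By the main theorem, $g$ is fully commutative if and only if $\pi(g)$ is, so the fully commutative elements of $G(m,1,n)$ are partitioned by $\pi$ over the fully commutative elements of $B_n$ and
\[
 \#\{\text{fully commutative elements of }G(m,1,n)\}\;=\;\sum_{h}(m-1)^{k(h)},
\]
the sum being over all fully commutative $h\in B_n$. Setting $m=2$ recovers Proposition~\ref{prop: type B}, and the formal value $m=1$ recovers the Catalan count for $\Symm_n$; these are convenient checks on the target formula.

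Next I would compute $k(h)$ from Stembridge's canonical reduced word. Only $s_0$ changes a sign, so the negative entries of $h$ are produced entirely by the occurrences of $s_0$, each of which toggles position $1$; hence $k(h)$ is at most the number of $s_0$'s in a reduced word, with equality whenever these toggles never undo one another. For a fully commutative $h$ with canonical reduced word $[m_1,n_1]\cdots[m_r,n_r]$ as in Theorem~\ref{thrm: type B}, the blocks $[m_i,n_i]$ that contain an $s_0$ are exactly those with $m_i\le 0$, and a short analysis shows that no cancellation occurs: in case (b) there is exactly one such block and it acts while position $1$ is still positive, so $k(h)=1$; in case (a), processing the blocks $[0,n_i]$ in order and using $n_i\ge r-i$ (forced by $n_1>\cdots>n_r\ge 0$), one checks by induction that after each such block the negative entries occupy an initial segment of positions, so each such block contributes exactly one negative entry and $k(h)=r-s$. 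The weighted sum above is therefore
\[
 \sum_{h\text{ in case (a)}}(m-1)^{\,r-s}\;+\;(m-1)\cdot\#\{h\text{ in case (b)}\}.
\]

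The final step is to evaluate this sum by refining the enumeration behind Proposition~\ref{prop: type B}. Stembridge parametrizes the canonical words in cases (a) and (b) by the tuples $(s;\,m_1>\cdots>m_s>0;\,n>n_1>\cdots>n_r\ge 0)$, respectively the analogous data with a negative final index, and organizes the count as a sum over certain plane partitions. Running that computation again while marking each block with $m_i\le 0$ by the variable $m-1$ produces an explicit expression for $\sum_{h}(m-1)^{k(h)}$. Its case-(a) part splits according to the value of $r-s$: the terms with $r-s=0$ are the $321$-avoiding permutations and give $C_n$, while the terms with $r-s\ge 1$, together with the case-(b) part, should reorganize into $m(m-1)\sum_{s=0}^{n-2}\frac{(n+s)!\,(n-s+1)}{s!\,(n+1)!}\,m^{n-2-s}$, the residual $(2m-1)C_n-(m-1)-C_n$ coming from low-order boundary terms. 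Equating the result with the stated expression finishes the proof.

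I expect this refined enumeration to be the main obstacle: Stembridge's plane-partition count has to be redone with one extra variable marking the sign-changing blocks, and the resulting double sum then reorganized into the claimed closed form, which is a hypergeometric-type identity in the spirit of the one behind Proposition~\ref{prop: type B}. A secondary technical point is the hypothesis $n\ge 3$: for $n\le 2$ the families in cases (a) and (b) overlap or degenerate, so the boundary terms must be checked directly in those cases. One could instead re-run the prefix-and-suffix analysis of \cite{FKLO} with each nontrivial root of unity marked, but routing the argument through the $B_n$ reduction is the approach in keeping with this note and is likely the shortest.
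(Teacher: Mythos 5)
First, note that the paper does not actually prove this statement: Theorem~\ref{thrm: FKLO} is quoted from \cite{FKLO}, where it is obtained by an analysis of prefixes and suffixes of reduced expressions carried out inside $G(m,1,n)$ itself. Your proposed route --- push the count down to $B_n$ via $\pi$ and weight each fully commutative $h\in B_n$ by $(m-1)^{k(h)}$, where $k(h)$ is its number of $-1$ entries --- is a genuinely different strategy from \cite{FKLO}, and its first half coincides exactly with the paper's Corollary~\ref{cor: coeff}. That part of your argument is sound: the fiber of $\pi$ over $h$ has $(m-1)^{k(h)}$ elements, $k(h)$ equals the number of occurrences of $s_0$ in any reduced word, and your identification $k(h)=r-s$ in case (a) and $k(h)=1$ in case (b) of Theorem~\ref{thrm: type B} is correct.

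The gap is that everything after this reduction is asserted rather than proved. The entire content of the theorem now resides in the refined enumeration $\alpha_{n,k}=\#\{h\in B_n\ \text{fully commutative with}\ k(h)=k\}$ and in showing that $\sum_k\alpha_{n,k}(m-1)^k$ equals the stated closed form; you say the case-(a) terms with $r-s\ge 1$ together with the case-(b) terms ``should reorganize into'' the FKLO expression, but you neither redo Stembridge's plane-partition count with the extra marking variable nor verify the resulting binomial identity, and this is precisely the hypergeometric-type computation that carries the theorem (the paper's own version of it, in the corollary following Corollary~\ref{cor: coeff}, occupies the bulk of a multi-page coefficient-by-coefficient argument using Chu--Vandermonde). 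Note also that you cannot borrow the values $\alpha_{n,1}=C_{n+1}-1$ and $\alpha_{n,k}={2n\choose n+k}-{2n\choose n+k+1}$ ($k\neq 1$) from the paper, because the paper derives them \emph{from} Theorem~\ref{thrm: FKLO} by matching coefficients of powers of $m$; using them here would be circular. To complete your proof you would need an independent derivation of $\alpha_{n,k}$, for instance by directly counting the canonical words $[m_1,n_1]\cdots[m_r,n_r]$ of Theorem~\ref{thrm: type B} with exactly $k$ nonpositive $m_i$, and then the explicit summation identity converting $\sum_k\alpha_{n,k}(m-1)^k$ into the displayed formula.
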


In his Ph.D. thesis, Mak presented certain coset representatives, similar to what Stembridge used in $B_n$.
\begin{proposition}[{\cite[Prop.~2.2.6]{Mak}}]\label{prop: Mak}
The shortest left coset representatives for $G(m,1,n)/G(m,1,n-1)$ are \begin{multline*} \{s_0^{\epsilon} s_1\cdots s_{n-1}, \quad s_1s_0^{\epsilon}s_1\cdots s_{n-1}, \quad \ldots, \quad s_{n-1}\ldots s_1s_0^{\epsilon} s_1\ldots s_{n-1}\, | \, 1\leq \epsilon\leq m-1\} \\
\cup\, \{ 1, \quad s_{n-1},\quad  s_{n-2}s_{n-1}, \quad \ldots,\quad s_{n-1}\cdots s_1\}.\qquad \qquad\qquad\qquad\end{multline*}
\end{proposition}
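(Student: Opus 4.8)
The plan is to avoid all Coxeter-theoretic machinery and argue directly from how the generators move the last standard basis vector. Throughout I regard elements of $G(m,1,n)$ as $n\times n$ monomial matrices acting on $\CC^{n}$ from the left, with standard basis $e_{1},\dots,e_{n}$ and $\omega=\exp(2\pi i/m)$. The first step is to note that $H:=G(m,1,n-1)=\langle s_{0},s_{1},\dots,s_{n-2}\rangle$ is exactly the set of elements of $G(m,1,n)$ fixing $e_{n}$. Consequently $gH=g'H$ if and only if $ge_{n}=g'e_{n}$, and $gH\mapsto ge_{n}$ is a bijection from $G(m,1,n)/H$ onto $\{\omega^{a}e_{i}:1\le i\le n,\ 0\le a\le m-1\}$, a set of size $mn=[G(m,1,n):H]$ (since $|G(m,1,n)|=m^{n}n!$). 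So it suffices to check that (i) the $mn$ listed elements send $e_{n}$ to these $mn$ distinct vectors, hence represent all cosets; (ii) each listed element has minimal length in its coset, with the displayed word realizing that length; and (iii) it is the unique shortest element.

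The heart of the matter is a lower bound for $\ell$. Fix $g\in G(m,1,n)$ and any expression $g=s_{j_{1}}s_{j_{2}}\cdots s_{j_{L}}$; applying the letters to $e_{n}$ from the right gives vectors $v_{0}=e_{n},v_{1},\dots,v_{L}=ge_{n}$, each of the form $v_{t}=\omega^{c_{t}}e_{p_{t}}$. Reading off the matrices of the generators, each step $v_{t-1}\mapsto v_{t}$ is of exactly one of three types: it changes the index (via some $s_{k}$, $k\ge1$, with $p_{t-1}\in\{k,k+1\}$, so $|p_{t}-p_{t-1}|=1$ and $c_{t}=c_{t-1}$); it changes the weight (via $s_{0}$ with $p_{t-1}=1$, so $p_{t}=p_{t-1}$ and $c_{t}=c_{t-1}+1$); or it does nothing. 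If $A$ and $B$ count the index- and weight-changing steps then $L\ge A+B$, and $A,B$ admit lower bounds depending only on $ge_{n}$: if $ge_{n}=e_{i}$ then the index walk from $n$ to $i$ forces $A\ge n-i$; if $ge_{n}=\omega^{a}e_{i}$ with $1\le a\le m-1$ then $B\ge a$ (weights only increment and must reach $a$ modulo $m$) and — the crucial point — every weight-changing step happens at index $1$, so the index walk from $n$ to $i$ must visit $1$ and therefore uses at least $(n-1)+(i-1)$ steps. Minimizing over all expressions of $g$ yields $\ell(g)\ge n-i$ in the first case and $\ell(g)\ge n+i+a-2$ in the second.

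To finish I would compute, again by tracking the trajectory of $e_{n}$, that $s_{i}s_{i+1}\cdots s_{n-1}$ sends $e_{n}\mapsto e_{i}$ (with the empty word handling $i=n$), while $s_{i-1}s_{i-2}\cdots s_{1}\,s_{0}^{a}\,s_{1}s_{2}\cdots s_{n-1}$ sends $e_{n}\mapsto\omega^{a}e_{i}$; their word lengths are $n-i$ and $n+i+a-2$. As $i$ runs over $\{1,\dots,n\}$ and, in the second family, $a$ over $\{1,\dots,m-1\}$, these $mn$ elements lie in the $mn$ distinct cosets of the first step and each attains the lower bound of the second step, so each is reduced and is a shortest coset representative; this is precisely the list in the statement (after the straightforward bookkeeping matching indices and conventions). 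Uniqueness then follows by examining the equality case: $L=A+B$ with $A,B$ minimal forbids trivial steps and forces the index walk to descend monotonically $n\to1$, remain at $1$ while the $a$ copies of $s_{0}$ act, and ascend monotonically $1\to i$, which pins the reduced word down to the listed one.

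I expect the only delicate step to be the lower bound $\ell(g)\ge n+i+a-2$, and within it the observation that a nonzero final weight forces the index walk to reach $1$, which is what makes the length-$(n-1)+(i-1)$ detour unavoidable; the coset--orbit bijection, the two explicit families of words, and the equality analysis are all routine once this trajectory framework is set up. A pleasant feature is that no defining relation of $G(m,1,n)$ is needed --- only the explicit matrices of $s_{0},\dots,s_{n-1}$ and the fact that they generate the group.
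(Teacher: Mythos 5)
Your argument is correct, and it is worth noting at the outset that the paper itself offers no proof of this proposition: it is quoted verbatim from Mak's thesis, where it is established inside an inductive analysis of the tower $G(m,1,1)\subset G(m,1,2)\subset\cdots$ using the presentation by generators and relations. Your route is genuinely different and, I think, cleaner for this purpose: identifying $G(m,1,n-1)$ with the stabilizer of $e_n$ turns left cosets into points of the orbit $\{\omega^a e_i\}$, and the trichotomy of steps (index-changing, weight-changing, trivial) converts the length lower bound into a walk-counting argument that never invokes a single defining relation. The two inequalities $A\ge (n-1)+(i-1)$ (because every weight change is pinned to index $1$) and $B\ge a$ (because $s_0$ only increments) act on disjoint sets of letters, so $L\ge A+B$ is legitimate, and your explicit words meet the bound; the equality analysis then forces monotone descent, a block of $s_0$'s at index $1$, and monotone ascent, which gives you uniqueness of the minimal representative in each coset --- a fact the paper tacitly relies on when it asserts in Definition~\ref{canonical word} that \emph{the} canonical reduced word exists, and which is not free here since $G(m,1,n)$ is not a Coxeter group for $m>2$. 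Two small bookkeeping remarks: your trajectory computation shows that the element written $s_{n-1}\cdots s_1$ in the second displayed family should be read as $s_1 s_2\cdots s_{n-1}$ (sending $e_n\mapsto e_1$), consistent with Stembridge's list for $B_n$ reproduced earlier in the paper; and in the weight-zero uniqueness case you should say explicitly that $L=n-i$ forces $B=0$ (since $B\equiv 0 \pmod m$ and any positive $B$ would cost at least $m$ extra letters), which your "no trivial steps" clause covers but does not spell out.
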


For $0< i\leq j$, $k\geq 0$, and $\epsilon\in \{1, 2, \ldots, m-1\}$, let $[i^\epsilon,j]=[i,j]=s_i\cdot s_{i+1}\cdots s_j$, $[(- i)^{\epsilon}, j]=s_i\cdot s_{i+1}\cdots s_1  s_0^{\epsilon} s_1\cdot s_2\cdots s_j$ and $[0^{\epsilon}, k]=s_{0}^{\epsilon} s_1 \cdots s_k$. As a result of Proposition \ref{prop: Mak}, it follows that there exists a canonical reduced word for every element in $G(m,1,n)$.

\begin{definition}\label{canonical word}
In $G(m,1,n)$, every element has a canonical reduced word \[[m_1^{a_1}, n_1]\cdot [m_2^{a_2}, n_2]\cdots [m_r^{a_r}, n_r],\] where $n>n_1>\cdots >n_r\geq 0$, $|m_i|\leq n_i$ and $1\leq a_i\leq m-1$. 
\end{definition}

 \begin{example}\label{example}
 Consider $G(7,1,6)$, and let $g=\left[\begin{smallmatrix} &\omega^2&&&&&\\ &&1&&&\\ \omega&&&&&\\ &&&\omega^4&&\\ &&&&&\omega^6\\ &&&&\omega^5&\end{smallmatrix}\right]=[(132)(4)(56);(1, 2,0,4,5, 6)].$ Then $g$ has a canonical reduced word
 \[ [-4^6, 5]\cdot [-4^5, 4]\cdot [-3^4,3]\cdot [2^1,2]\cdot [0^2,1]\cdot [0^1, 0]\]
with a reduced expression\footnote{Here canonical reduced word and reduced expression refer to the same expression. We call the one with bracket notation \emph{canonical reduced word} and the one with generators multiplied together \emph{reduced expression}.} 
\[
s_4 s_3 s_2 s_1 s_0^6 s_1 s_2 s_3 s_4 s_5 \cdot
s_4 s_3 s_2 s_1 s_0^5 s_1 s_2 s_3 s _4 \cdot
s_3 s_2 s_1 s_0^4 s_1 s_2 s_3 \cdot
 s_2 \cdot
s_0^2 s_1 \cdot
s_0^1.
\]
Observe that the exponent $a_i$ in every block $[m_i^{a_i}, n_i]$ where $m_i \leq 0$ corresponds to the weight of a nontrivial entry in $g$. 

\section{The main theorem}
\label{sec:main}

Before we state our main theorem, we need a few definitions first.

\begin{definition}
Let $g\in G(m,1,n)$. We say $g$ has a \emph{nontrivial} entry if that entry is neither $0$ nor $1$.
\end{definition}

\begin{definition}
For positive integers $m$ and $n$, define a map 
\[
\pi: G(m, 1, n) \rightarrow G(2, 1, n) = B_n
\]
where $\pi(g)$ is the result of replacing every nontrivial entry in the matrix of $g$ with $-1$. 
\end{definition}

Continuing with Example \ref{example}, the image $\pi(g)=\left[\begin{smallmatrix} &-1&&&&&\\ &&1&&&\\ -1&&&&&\\ &&&-1&&\\ &&&&&-1\\ &&&&-1&\end{smallmatrix}\right]=[(132)(56);(1, 1,0,1,1, 1)]\in G(2,1,6)$ has a canonical reduced word
 \[ [-4, 5]\cdot [-4, 4]\cdot [-3,3]\cdot [2,2]\cdot [0,1]\cdot [0, 0]\]
with a reduced expression 
\[
s_4 s_3 s_2 s_1 s_0 s_1 s_2 s_3 s_4 s_5 \cdot
s_4 s_3 s_2 s_1 s_0 s_1 s_2 s_3 s _4 \cdot
s_3 s_2 s_1 s_0 s_1 s_2 s_3 \cdot
s_2 \cdot
s_0 s_1 \cdot
s_0.
\]
\end{example}
 
 \begin{theorem}\label{main theorem}
Let $g\in G(m,1,n)$. It is fully commutative if and only if $\pi(g)$ is fully commutative in $G(2,1,n)$.
\end{theorem}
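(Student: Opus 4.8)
The plan is to work with canonical reduced words on both sides and to compare full commutativity block-by-block. By Definition~\ref{canonical word}, write $g = [m_1^{a_1}, n_1]\cdots[m_r^{a_r}, n_r]$ with $n > n_1 > \cdots > n_r \geq 0$, $|m_i| \leq n_i$, $1 \leq a_i \leq m-1$. The key structural observation, already flagged in Example~\ref{example}, is that applying $\pi$ replaces each nontrivial diagonal entry of weight $a_i$ by one of weight $1$, so that $\pi(g)$ has canonical reduced word $[m_1, n_1]\cdots[m_r, n_r]$ in $B_n$ \emph{with the same sequence of bracket-data $(m_i, n_i)$}; only the $s_0$-exponents change (each $s_0^{a_i}$ becomes $s_0$). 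First I would verify this claim carefully: that $\pi$ is well-defined, that it sends reduced words to reduced words of the stated shape, and that the canonical form is preserved — this is essentially bookkeeping with the coset representatives of Proposition~\ref{prop: Mak} versus those used by Stembridge.

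Given this, the theorem reduces to showing: the word $[m_1^{a_1}, n_1]\cdots[m_r^{a_r}, n_r]$ is fully commutative in $G(m,1,n)$ if and only if $[m_1, n_1]\cdots[m_r, n_r]$ is fully commutative in $B_n$. By Theorem~\ref{thrm: type B}(2), the right-hand condition depends \emph{only} on the sign pattern of the $m_i$, namely it holds iff either $m_1 > \cdots > m_s > m_{s+1} = \cdots = m_r = 0$, or $m_1 > \cdots > m_{r-1} > -m_r > 0$. So I would prove that the \emph{same} combinatorial condition on the $(m_i, n_i)$ characterizes full commutativity in $G(m,1,n)$ — independently of the exponents $a_i$. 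The forward direction (if $g$ is FC in $G(m,1,n)$ then the $m_i$ satisfy the condition) I would get by a braiding argument: if the condition fails, there is a short forbidden configuration among consecutive blocks producing a reduced expression with a braid move $s_{i+1}s_is_{i+1} = s_is_{i+1}s_i$ (the $s_1 s_0 s_1 s_0 = s_0 s_1 s_0 s_1$ relation plays no role in creating such obstructions, since the $a_i$ only appear as $s_0$-powers sandwiched inside $[-i^\epsilon, j]$ blocks), essentially transcribing Stembridge's analysis in $B_n$ — the exponents on $s_0$ are spectators here. For the converse, I would check that when the condition holds, every reduced expression of $g$ is obtained from the canonical one by commutations; the only place one might worry is around the $s_0$-powers, but since $s_0^{a_i}$ commutes with $s_j$ for $j \geq 2$ and the surrounding $s_1$'s are forced by the block structure, the commutation class is unaffected by replacing $s_0^{a_i}$ with $s_0$, which is exactly the $B_n$ situation.

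The cleanest way to organize the converse may be to avoid re-deriving Stembridge's case analysis and instead argue directly: full commutativity of a word $w$ is equivalent to the absence of a reduced expression containing a braidable factor $s_is_js_i$ with $m(s_i,s_j)=3$, together with the absence of an $s_0 s_1 s_0 s_1$-type factor. One shows that such a factor can be created in a reduced expression for $g = [m_1^{a_1},n_1]\cdots[m_r^{a_r},n_r]$ if and only if it can be created in $\pi(g) = [m_1,n_1]\cdots[m_r,n_r]$, because the block structure and the positions where generators $s_1,\ldots,s_{n-1}$ appear are identical in the two words, and the extra $s_0$'s in $g$ cannot participate in a degree-$3$ braid (they only interact with $s_1$ via the degree-$2$ relation $s_1s_0s_1s_0 = s_0s_1s_0s_1$, which is a commutation-type relation in disguise and never obstructs). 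I expect \textbf{the main obstacle} to be this last point: making precise that the $s_1s_0s_1s_0$ relation, although it is a braid-like relation of even length, never forces a failure of full commutativity for the specific words arising as canonical reduced words — equivalently, that whenever a canonical reduced word in $G(m,1,n)$ admits the relation $s_1 s_0 s_1 s_0 = s_0 s_1 s_0 s_1$ inside some reduced expression, the two sides were already connected by commutations. This requires a careful look at which canonical words contain a reduced subword of the form $s_1 s_0^{a} s_1 s_0^{b}$ (or its image under sliding), and checking that the bracket constraints $|m_i| \leq n_i$, $n_i$ strictly decreasing, prevent exactly the bad overlaps — mirroring, but slightly generalizing, the corresponding step in Stembridge's proof of Theorem~\ref{thrm: type B}.
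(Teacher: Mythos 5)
Your plan matches the paper's proof: the paper establishes exactly your two ingredients as Proposition~\ref{prop: obs} (that $\pi$ sends the canonical word $[m_1^{a_1},n_1]\cdots[m_r^{a_r},n_r]$ to $[m_1,n_1]\cdots[m_r,n_r]$) and Proposition~\ref{main prop} (that full commutativity in $G(m,1,n)$ is characterized by the same sign conditions on the $m_i$ as in Theorem~\ref{thrm: type B}, independently of the exponents $a_i$), and then combines them exactly as you describe. One small correction to your final paragraph: a factor $s_1s_0^{a}s_1s_0^{b}$ in a reduced expression is a genuine obstruction --- the two sides of $s_1s_0^{a}s_1s_0^{b}=s_0^{b}s_1s_0^{a}s_1$ are never commutation-equivalent --- and the content of the paper's case analysis is that such a factor (like a degree-$3$ braid factor) can arise only when the sign condition on the $m_i$ fails, not that it appears harmlessly in fully commutative elements.
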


To prove Theorem \ref{main theorem}, we need a few propositions. The first proposition describes how the map $\pi$ works on canonical reduced words. In general, one does not have $\pi(g_1 g_2) = \pi(g_1)\pi(g_2)$ for an arbitrary pair $g_1, g_2 \in G(m,1,n)$. So $\pi$ is not a group homomorphism from $G(m,1,n)$ to $G(2,1,n)$. But it does behave nicely with respect to canonical reduced words.
 
  \begin{proposition}\label{prop: obs}
 Let $g\in G(m,1,n)$ have a canonical reduced word $[m_1^{a_1}, n_1]\cdot [m_2^{a_2}, n_2]\cdots [m_r^{a_r}, n_r]$. Then $\pi(g) \in G(2,1,n)$ has a canonical reduced word
\[\begin{array}{rl}
 \pi([m_1^{a_1}, n_1]\cdot [m_2^{a_2}, n_2]\cdots [m_r^{a_r}, n_r])&=\pi([m_1^{a_1}, n_1])\cdot \pi([m_2^{a_2}, n_2])\cdots \pi([m_r^{a_r}, n_r])\\ 
 & = [m_1, n_1]\cdot [m_2, n_2]\cdots [m_r, n_r].
 \end{array}\]
 \end{proposition}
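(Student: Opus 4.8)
The plan is to deduce the proposition from one structural fact about the matrix of $g$, which I will call $(\star)$: \emph{if $g=[m_1^{a_1},n_1]\cdots[m_r^{a_r},n_r]$ is a canonical reduced word in $G(m,1,n)$, then every nonzero entry of the matrix of $g$ equals $1$, except that for each index $i$ with $m_i\le 0$ there is exactly one nontrivial entry, namely $\omega^{a_i}$, and it lies in column $n_i+1$.} Granting $(\star)$, the proposition follows quickly. Applied with $m=2$, the fact $(\star)$ says that in $B_n$ the element $[m_1,n_1]\cdots[m_r,n_r]$ is the monomial matrix whose nonzero entries equal $-1$ in exactly the columns $n_i+1$ with $m_i\le 0$ and $1$ elsewhere, and whose underlying permutation is that of $g$: indeed the projection $f$ to $\Symm_n$ is a homomorphism and $f([m_i^{a_i},n_i])$ depends only on $m_i$ and $n_i$, so $f(g)=f([m_1,n_1]\cdots[m_r,n_r])$. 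On the other hand $\pi$ does not move nonzero entries, so $f(\pi(g))=f(g)$, and it replaces precisely the entries $\omega^{a_i}$ by $-1$; hence $\pi(g)=[m_1,n_1]\cdots[m_r,n_r]$ as elements of $B_n$. Since this expression satisfies Stembridge's constraints $n>n_1>\cdots>n_r\ge 0$ and $|m_i|\le n_i$, uniqueness of canonical reduced words in $B_n$ identifies it as the canonical reduced word of $\pi(g)$. Taking $r=1$ in this argument gives the single-block identities $\pi([m_i^{a_i},n_i])=[m_i,n_i]$, and the two displayed equalities follow at once; note that the first of them, $\pi([m_1^{a_1},n_1]\cdots[m_r^{a_r},n_r])=\pi([m_1^{a_1},n_1])\cdots\pi([m_r^{a_r},n_r])$, does need an argument, since $\pi$ is not a group homomorphism.

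To prove $(\star)$ I would induct on $r$, the case $r=0$ being trivial. For the inductive step, peel off the leftmost block: write $g=C\cdot g'$ where $C=[m_1^{a_1},n_1]$ lies in $G(m,1,n_1+1)$ and $g'=[m_2^{a_2},n_2]\cdots[m_r^{a_r},n_r]$ lies in $G(m,1,n_1)$ (using $n_2<n_1$). First analyze $C$ from its defining word. If $m_1>0$ then $C=s_{m_1}s_{m_1+1}\cdots s_{n_1}$ is a permutation matrix, with every nonzero entry equal to $1$. If $m_1\le 0$ then $C=(s_{|m_1|}\cdots s_1)\,s_0^{a_1}\,(s_1 s_2\cdots s_{n_1})$, where the parenthesized prefix is empty when $m_1=0$; since left multiplication by a permutation matrix leaves the values of column entries unchanged while right multiplication by $s_j$ interchanges columns $j$ and $j+1$, the unique nontrivial entry $\omega^{a_1}$ of $s_0^{a_1}$, initially in column $1$, is carried successively to column $n_1+1$, while every other column of $C$ has entry $1$ and the coordinates $>n_1+1$ are fixed. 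Now $g'\in G(m,1,n_1)$ fixes the coordinates $>n_1$, so its nontrivial entries lie in columns $\le n_1$, and by induction they are exactly $\omega^{a_i}$ in column $n_i+1$ for the indices $i\ge 2$ with $m_i\le 0$. Composing $g=Cg'$: for a column $c\le n_1$ the vector $g(e_c)=C(g'(e_c))$ equals the column-$c$ entry of $g'$ times column $\tau(c)$ of $C$, where $\tau=f(g')$ and $\tau(c)\le n_1$, and that column of $C$ has entry $1$, so the column-$c$ entry of $g$ equals that of $g'$; for $c=n_1+1$ we get $g(e_{n_1+1})=C(e_{n_1+1})$, whose entry is $\omega^{a_1}$ when $m_1\le 0$; and the coordinates $>n_1+1$ are fixed. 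This is exactly $(\star)$.

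The one delicate point — and the step I expect to be the main obstacle to write down cleanly — is ruling out cancellation. A priori, when the blocks are multiplied out, two nontrivial entries $\omega^a$ and $\omega^b$ could come to occupy the same column and combine, and, unlike in $B_n$ where $(-1)^2=1$, the product $\omega^{a+b}$ need not be trivial; any such cancellation would destroy the correspondence with the $m=2$ case. The content of the induction above is precisely that this never occurs for a canonical reduced word: the strict chain $n_1>\cdots>n_r$ forces the ``new'' columns $n_i+1$ to be pairwise distinct, and at each stage the column $n_1+1$ in which the leftmost block deposits its entry lies outside the support $\{1,\ldots,n_1\}$ of the remaining factor $g'$. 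Once this is in place, the rest is routine bookkeeping with monomial matrices.
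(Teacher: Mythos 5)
Your proof is correct and rests on the same structural observation as the paper's: each block $[m_i^{a_i},n_i]$ with $m_i\le 0$ deposits exactly one nontrivial entry $\omega^{a_i}$, in column $n_i+1$, and these columns are distinct, so $\pi$ acts block-by-block. The paper asserts this correspondence by pointing to Example \ref{example}, whereas you actually prove it (the induction on $r$, the column-tracking computation for a single block, and the explicit no-cancellation argument using that $g'$ is supported on columns $\le n_1$), so your write-up is a more rigorous version of the same argument rather than a different one.
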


\begin{proof}
If all $m_i$'s in the canonical reduced word $[m_1^{a_1}, n_1]\cdot [m_2^{a_2}, n_2]\cdots [m_r^{a_r}, n_r]$ of $g$ are all positive, then $\pi(g)$ and $g$ have identical canonical reduced word. Then we are done.

As in Example \ref{example}, if $m_i$ of the $i$-th block $[m_i^{a_i}, n_i]$ from the left in the canonical reduced word is non-positive, then its reduced expression has the term $s_0^{a_i}$, which corresponds to the $i$-th nontrivial entry from the right columnwise in $g$ with weight $a_i$. Then its image $\pi(g)$ has a $-1$ at the same location. Then the canonical reduced word of $\pi(g)$ has a block $[m_i, n_i]$ whose reduced expression has the term $s_0$, as desired.
\end{proof}

The next proposition spells out the full commutativity condition on canonical reduced words in $G(m,1,n)$. 

\begin{prop}\label{main prop}
Let $g\in G(m,1,n)$. Then $g$ is fully commutative if and only if its canonical reduced word $[m_1^{a_1}, n_1]\cdot [m_2^{a_2}, n_2]\cdots [m_r^{a_r}, n_r]$ has either
\begin{enumerate}[(a)]
\item $m_1 >\cdots >m_{r-1}>-m_r>0$, or
\item for some $s\leq r$,  $m_1>\cdots>m_s=m_{s+1} = \cdots = m_r = 0$.
\end{enumerate}
\end{prop}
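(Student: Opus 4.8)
The plan is to reduce Proposition~\ref{main prop} to the type-$B$ case (Theorem~\ref{thrm: type B}) via the combinatorics of the map $\pi$. First I would observe that the conditions (a) and (b) listed in Proposition~\ref{main prop} depend only on the data $m_1 > \cdots > m_r$ and the base points $n_1 > \cdots > n_r$, not on the exponents $a_i$; they are literally the same conditions that appear in part (2) of Theorem~\ref{thrm: type B} applied to the word $[m_1, n_1]\cdots[m_r, n_r]$. By Proposition~\ref{prop: obs}, this latter word is exactly the canonical reduced word of $\pi(g) \in B_n$. So the statement to prove is equivalent to: $g \in G(m,1,n)$ is fully commutative if and only if $\pi(g)$ is fully commutative in $B_n$ --- which is precisely the Main Theorem~\ref{main theorem}. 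Thus Proposition~\ref{main prop} is really a corollary of Theorem~\ref{main theorem} together with Theorem~\ref{thrm: type B}, and I would structure the writeup to make that dependency explicit (or, if one wants Proposition~\ref{main prop} as a lemma en route to Theorem~\ref{main theorem}, prove it directly by the heap/commutation argument below).

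For a direct proof, the key steps would be: \textbf{(1)} Set up the combinatorics of commutation in $G(m,1,n)$. The generators $s_1,\ldots,s_{n-1}$ satisfy the braid and commutation relations of type $A$, while $s_0$ commutes with $s_j$ for $j \geq 2$ and satisfies the length-$4$ braid-type relation $s_1 s_0 s_1 s_0 = s_0 s_1 s_0 s_1$ with $s_1$. The only ``non-commutation, non-full-commutation'' local moves are the type-$A$ braid move $s_{i+1}s_i s_{i+1} = s_i s_{i+1} s_i$ and the mixed move at $s_0, s_1$; every other relation among distinct generators is a commutation. \textbf{(2)} Analyze when the canonical reduced word $[m_1^{a_1},n_1]\cdots[m_r^{a_r},n_r]$ admits a non-commutation move. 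The intervals $[m_i^{a_i}, n_i]$ are ``zigzag'' strings; concatenating two consecutive ones $[m_i^{a_i},n_i]\cdot[m_{i+1}^{a_{i+1}},n_{i+1}]$ creates a forbidden braid pattern precisely when the supports overlap in the wrong way --- this is where the inequalities $m_i > m_{i+1}$ and the sign condition on $m_r$ come from. Here the exponents $a_i$ play no role because the $s_0^{a_i}$ block is ``rigid'': $s_0^{a}$ commutes past $s_j$ for $j\ge 2$ exactly as $s_0$ does, and the mixed relation $s_1 s_0 s_1 s_0 = s_0 s_1 s_0 s_1$ only ever involves a \emph{single} $s_0$-factor adjacent to an $s_1$, never the whole power. \textbf{(3)} Conclude by checking that the absence of any non-commutation move among all reduced words is equivalent to conditions (a)/(b), mirroring Stembridge's argument in type $B$ verbatim except for carrying the exponents along inertly.

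I expect the main obstacle --- assuming one goes the direct route rather than quoting Theorem~\ref{main theorem} --- to be Step~(2): one must be careful that no \emph{sequence} of commutation moves can expose a hidden braid or mixed-braid obstruction that is not visible in the canonical word itself, and conversely that when conditions (a)/(b) hold the canonical word genuinely has no such obstruction in \emph{any} equivalent form. The cleanest way to handle this rigorously is the heap model: realize each reduced word as a heap (a labelled poset), show $g$ is fully commutative iff its heap has no ``convex chain'' realizing a braid or the $s_0 s_1 s_0 s_1$ pattern, and observe that the heap of $g$ and the heap of $\pi(g)$ are \emph{identical as posets} --- the only difference is that some vertices of the $\pi(g)$-heap carry label $s_0$ while the corresponding vertices of the $g$-heap carry a ``weighted'' $s_0$, and since $s_0^a$ and $s_0$ interact with the other generators through the same covering relations, the two heaps have the same convex subposets. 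This reduces everything to the known type-$B$ characterization and makes the exponent-independence transparent. Given that Proposition~\ref{prop: obs} is already proved above, I would in the final text take the shortest path: deduce Proposition~\ref{main prop} immediately from Theorem~\ref{main theorem} and Theorem~\ref{thrm: type B}(2), noting that the conditions are visibly identical since $\pi$ preserves the $(m_i, n_i)$-data of the canonical reduced word.
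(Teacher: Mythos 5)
Your primary route is circular relative to the paper's logical structure. In the paper, Theorem~\ref{main theorem} is \emph{deduced from} Proposition~\ref{main prop} (together with Proposition~\ref{prop: obs} and Theorem~\ref{thrm: type B}); there is no independent proof of Theorem~\ref{main theorem} available. So ``Proposition~\ref{main prop} is really a corollary of Theorem~\ref{main theorem}'' cannot be the proof --- it inverts the actual dependency. You do flag this possibility and offer a direct argument as the fallback, and that fallback is indeed the right shape (it is what the paper does), but you have not carried it out: your Step~(2), which you yourself identify as the main obstacle, is precisely where all the content lives. The paper's proof of the forward direction exhibits explicit braid obstructions ($[-1^a,i]s_0^b$ and $[-1^a,i]s_j$ forcing $m_1,\ldots,m_{r-1}\geq 0$, and $[i^a,j]s_k$ forcing $|m_{i+1}|<|m_i|$ or $m_i=m_{i+1}=0$), and the converse is a contrapositive argument running through a lengthy case analysis of where the letters of a forbidden subword $s_is_{i+1}s_i$, $s_{i+1}s_is_{i+1}$, $s_1s_0^as_1s_0^{a'}$, or $s_0^as_1s_0^{a'}s_1$ can sit among the blocks of the canonical word. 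None of that is present in your sketch beyond the assertion that it should work.

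Two further points of caution on the direct route. First, the relevant non-commutation relations in $G(m,1,n)$ are not only the defining ones: one must also use the derived identities $s_1s_0^as_1s_0^b = s_0^bs_1s_0^as_1$ for arbitrary exponents $a,b$, which are equalities of group elements not obtainable by commutations; your remark that the mixed relation ``only ever involves a single $s_0$-factor'' understates this, even if the exponents do end up playing an inert role. Second, your heap claim that the heaps of $g$ and $\pi(g)$ are ``identical as posets'' is not literally true when some $a_i\geq 2$: the heap of $g$ then has $a_i$ vertices labelled $s_0$ where the heap of $\pi(g)$ has one. Collapsing each $s_0^{a_i}$ to a single weighted vertex can be made to work, but that convention and the verification that convex chains are preserved under it would need to be set up explicitly; as written it is an idea, not an argument.
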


\begin{proof}
$(\Rightarrow)$ Assume that $[m_1^{a_1}, n_1]\cdot [m_2^{a_2}, n_2]\cdots [m_r^{a_r}, n_r]$ is the canonical reduced word for some fully commutative element $g\in G(m,1,n)$. By Definition \ref{canonical word}, $n>n_1 >\cdots >n_r\geq 0$ and $|m_i|\leq n_i$. 
\begin{enumerate}
\item Case $(a)$: Let $a,b \in \{1, \ldots, m-1\}$. For $i>0$, $[-1^a,i]\cdot {\color{red} s_0^b} = s_1\cdot s_0^a\cdot s_1\cdots s_i\cdot{\color{red} s_0^b}= s_1\cdot s_0^a\cdot s_1\cdot {\color{red} s_0^b}\cdot [2,i]$. Note that $s_1s_0^a s_1 s_0^b=s_0^b s_1 s_0^a s_1$, but these two expressions are not equivalent via commutation relations. So a fully commutative element cannot have reduced expressions that contain $s_1s_0^a s_1 s_0^b$ or $s_0^b s_1 s_0^a s_1$. 

For $i>j>0$, $[-1^a,i]{\color{red} s_j}=s_1\cdot s_0^a\cdot s_1\cdots s_i\cdot {\color{red} s_j}=s_1\cdot s_0^a\cdot s_1\cdots s_{j-1}\cdot s_j\cdot s_{j+1}\cdot {\color{red} s_j}\cdot s_{j+2}\cdots s_i$. Note that here $s_j s_{j+1} s_j = s_{j+1} s_j s_{j+1}$. Since one cannot be achieved from the other via commutation relations, a fully commutative element cannot have reduced expressions that contain $s_j s_{j+1} s_j$ or $s_{j+1} s_j s_{j+1}$. 

Since $|m_{i+1}|<n_{i+1}< n_{i}$ by Definition \ref{canonical word},  $[-1^a, i]s_0^b$ or $[-1^a, i]s_j$ occurs in $[m_i^{a_i}, n_i][m_{i+1}^{a_{i+1}}, n_{i+1}]$ when $m_i<0$. Then we must have $m_1, \ldots, m_{r-1}\geq 0$.

\item Case $(b)$: For $j>k\geq i\geq 0$, $[i^a,j]{\color{red} s_k}=[i^a, k-1]s_k s_{k+1} {\color{red} s_k}[k+2,j]$. A fully commutative element cannot have canonical reduced words that contain $[i^a, j]s_k$, since $s_k s_{k+1} s_k = s_{k+1} s_k s_{k+1}$ when $k>0$. In $[m_i^{a_i}, n_i][m_{i+1}^{a_{i+1}}, n_{i+1}]$, $[i^a, j]s_k$ occurs when $|m_{i+1}|\geq |m_i|$. To avoid having $s_k s_{k+1} s_k = s_{k+1} s_k s_{k+1}$ $(k>0)$, we need $|m_{i+1}|<|m_{i}|$ or $m_i = m_{i+1}=0$ for $1\leq i <r$.
\end{enumerate}
Thus, for $g\in G(m,1,n)$ a fully commutative element, its canonical reduced word is either of case $(a)$ or $(b)$.

$(\Leftarrow)$ We prove the contrapositive statement: if $g\in G(m,1,n)$ is not fully commutative, then its canonical reduced word is neither case $(a)$ nor $(b)$. Suppose $g\in G(m,1,n)$ is not fully commutative. Then there is a reduced expression for $g$ containing at least one of the terms
\[s_1 s_0^{a} s_1 s_0^{a'}, \quad s_0^{a}s_1s_0^{a'}s_1, \quad s_{i+1}s_i s_{i+1}, \, \text{       and       } \,s_i s_{i+1} s_i  \quad (i\geq 1, \text{ and } a,a'\in \{1, \ldots m-1\})\]
 that is equivalent, via commutation relations, to the reduced expression associated with the canonical reduced word of $g$. 
 
 We now show every reduced expression containing such a term leads to a canonical reduced word that is neither case $(a)$ nor $(b)$. We discuss them by the following cases. 
 \begin{enumerate}
 \item $s_{i} s_{i+1} s_{i}$: These three factors cannot be in the same block due to the structure of an individual block: the indices of the factors are either strictly increasing or strictly decreasing then strictly increasing. The three factors also cannot be in all distinct blocks without breaking the rule $n>n_1>\cdots>n_r\geq 0$ and $|m_i|\leq n_i$ by Definition \ref{canonical word}. 
 
 Then assume these factors are in two adjacent blocks, which leads to two cases.
 \begin{enumerate}
 \item Assume $s_i s_{i+1}$ is in block $[m_1^{a_1}, n_1]$ and $s_i$ is in block $[m_2^{a_2}, n_2]$. In order to move $s_i$ in block $[m_2^{a_2}, n_2]$ next to $s_i s_{i+1}$ in block $[m_1^{a_1}, n_1]$ using only commutation relations, there should be no factor to the left of $s_i$ in block $[m_2^{a_2}, n_2]$. This means that $s_i$ is the first factor in block $[m_2^{a_2}, n_2]$. Then $m_2$ must be $i$ or $-i$ by Definition \ref{canonical word}. Similarly, the factors to the right of $s_i s_{i+1}$ in block $[m_1^{a_1}, n_1]$, if exist, must have indices larger than $i+1$. This means that $n_1 \geq i+1$. Since $s_i s_{i+1}$ is in block $[m_1^{a_1}, n_1]$, the indices of factors to the left of $s_is_{i+1}$ can be strictly increasing to $i-1$ or strictly decreasing to 0 then strictly increasing to $i-1$. Then $m_1\leq i$ by Definition \ref{canonical word}. Then either $m_1<0$ (while $r\geq 2$) or $|m_1|\leq |m_2|$, which is neither case $(a)$ nor $(b)$. 
 
  \item Assume $s_i$ is in block $[m_1^{a_1}, n_1]$ and $s_{i+1}s_i$ is in block $[m_2^{a_2}, n_2]$. In order to move $s_i$ from block $[m_1^{a_1}, n_1]$ next to $s_i s_{i+1}$ from block $[m_2^{a_2}, n_2]$ using only commutation relations, there should be no factors to the right of $s_i$ in block $[m_1^{a_1}, n_1]$. This means that $s_i$ is the last factor in block $[m_1^{a_1}, n_1]$. Thus $n_1 =i$ by Definition \ref{canonical word}. Similarly, the indices of the factors to the left of $s_{i+1} s_i$ in block $[m_2^{a_2}, n_2]$, if exist, should be larger than $i+1$. This means that $0>-(i+1)\geq m_2$ by Definition \ref{canonical word}. Since $|m_1|<n_1=i$ and $|m_2|\geq i+1$, then $|m_1|< |m_2|$, which is neither case $(a)$ nor $(b)$.
 \end{enumerate}
 
 \item $s_{i+1} s_{i} s_{i+1}$: As in case $(1)$, these three factors cannot be in all distinct blocks without breaking the rule $n>n_1>\cdots>n_r\geq 0$ and $|m_i|\leq n_i$. Also, these three factors cannot be in the same block. Suppose they are in the same block. Then we must have a reduced expression containing $s_{i+1} s_i s_{i-1} \cdots s_1 s_0^{a_1} s_1 \cdots s_{i-1} s_i s_{i+1}$ by Definition \ref{canonical word}. Thus, it is impossible to have $s_{i+1} s_i s_{i+1}$ via commutation relations. It suffices to assume the three factors are in two adjacent blocks, which also leads to two cases.
 
 \begin{enumerate}
 \item Assume $s_{i+1} s_i$ is in block $[m_1^{a_1}, n_1]$ and $s_{i+1}$ is in block $[m_2^{a_2}, n_2]$. In order to move $s_{i+1}$ from block $[m_2^{a_2}, n_2]$ next to $s_{i+1} s_{i}$ from block $[m_1^{a_1}, n_1]$ using only commutation relations, there should be no factors to the left of $s_{i+1}$ in block $[m_2^{a_2}, n_2]$. This means that $s_{i+1}$ is the first factor in block $[m_2^{a_2}, n_2]$. However, the indices of the factors to the right of $s_{i+1} s_i$ in block $[m_1^{a_1}, n_1]$ must be strictly decreasing to $0$ then strictly increasing to at least $i+1$ by Definition \ref{canonical word}. This means there is another $s_{i+1}$ to the right of $s_{i+1}s_i$ in block $[m_1^{a_1}, n_1]$. Thus, it is impossible to move $s_{i+1}$ from block $[m_2^{a_2}, n_2]$ next to $s_{i+1}s_i$ from block $[m_2^{a_2}, n_2]$ using only commutation relations. Then this case does not exist.

 \item Assume $s_{i+1}$ is in block $[m_1^{a_1}, n_1]$ and $s_i s_{i+1}$ is in block $[m_2^{a_2}, n_2]$. In order to move $s_{i+1}$ from block $[m_1^{a_1}, n_1]$ next to $s_i s_{i+1}$ from block $[m_2^{a_2}, n_2]$ using only commutation relations, factors to the left of $s_i s_{i+1}$ in block $[m_2^{a_2}, n_2]$, if exist, must have indices smaller than $i$. This means $0\leq m_2 \leq i$ by Definition \ref{canonical word}. Factors to the right of $s_{i}s_{i+1}$ in block $[m_2^{a_2}, n_2]$, if exist, must have indices larger than $i+1$. Then $n_2\geq i+1$ by Definition \ref{canonical word}. Similarly, there should be no factors to the right of $s_{i+1}$ in block $[m_2^{a_2}, n_2]$. This means that $s_{i+1}$ is the last factor in block $[m_1^{a_1}, n_1]$. Then $n_1=i+1$ by Definition \ref{canonical word}. Then we have $n_1\leq n_2$, which violates the rule $n>n_1>\cdots>n_r\geq 0$. Then this case does not exist.
 \end{enumerate}
 
 \item $s_1 s_0^{a} s_1 s_0^{a'}$: 
Since the indices of $s_1 s_0^{a} s_1 s_0^{a'}$ is neither strictly increasing nor strictly decreasing and then strictly increasing, they cannot be in a single block. Below we focus on cases where the four factors come from two adjacent blocks in the canonical reduced word. Similar analysis can be applied to cases when the factors are in three blocks and all distinct blocks.

 \begin{enumerate}
 \item Assume $s_1 s_0^a s_1$ is in block $[m_1^{a}, n_1]$ and $s_0^{a'}$ is in block $[m_2^{a'}, n_2]$. Since the indices of $s_1 s_0^a s_1$ in block $[m_1^{a}, n_1]$ decreases and then increases, $m_1<0$ by Definition \ref{canonical word}. In order to move $s_0^{a'}$ in block $[m_2^{a'}, n_2]$ next to $s_1 s_0^a s_1$ in block $[m_1^a, n_1]$ using only commutation relations, there should be no factors to the left of $s_0^{a'}$ in block $[m_2^{a'}, n_2]$. Then $s_0^{a'}$ is the first factor in block $[m_2^{a'}, n_2]$. Then $m_2=0$ by Definition \ref{canonical word}. Similarly, factors to the right of $s_1 s_0^a, s_1$ in block $[m_1^a, n_1]$ must have indices larger than $1$. Then $n_1\geq 1$ by Definition \ref{canonical word}. And the indices of factors to the left of $s_1 s_0^a s_1$ in block $[m_1^a, n_1]$ must be strictly decreasing to $1$. Then $0>-1\geq m_1$ by Definition \ref{canonical word}. Then we have $m_1<m_2$, which is neither case $(a)$ nor $(b)$.
 
 \item Assume $s_1 s_0^a$ is in block $[m_1^{a}, n_1]$ and $s_1s_0^{a'}$ is in block $[m_2^{a'}, n_2]$. Since the indices of both $s_1 s_0^a$ and $s_1 s_0^{a'}$ strictly decrease, then $m_1 <0$ and $m_2<0$ by Definition \ref{canonical word}. By the same definition, factors to the right of $s_1s_0^a$ in block $[m_1^a, n_1]$ must have indices strictly increasing from $1$. Then there is a $s_1$ to the right of $s_1 s_0^a$ in block $[m_1^a, n_1]$. Thus, it is impossible to move $s_1 s_0^{a'}$ in block $[m_2^{a'}, n_2]$ next to $s_1 s_0^a$ in block $[m_1^a, n_1]$ using only commutation relations. Then this case does not exist. 
 
 \item Assume $s_1$ is in block $[m_1^{a_1}, n_1]$ and $s_0^{a} s_1 s_0^{a'}$ in block $[m_2^{a_2}, n_2]$. This is impossible since the indices of $s_0^{a} s_1 s_0^{a'}$ is neither strictly increasing nor strictly decreasing and then strictly increasing. Then this case does not exist. 
 \end{enumerate}
 
 \item $s_0^{a} s_1 s_0^{a'} s_1$: As in Case $(3)$, we discuss cases where the four factors are in two adjacent blocks in the canonical reduced word.

 \begin{enumerate}
 \item Assume $s_0^{a} s_1 s_0^{a'}$ is in block $[m_1^{a_1}, n_1]$ and $s_1$ is in block $[m_2^{a_2}, n_2]$. This is impossible since the indices of $s_0^{a} s_1 s_0^{a'}$ is neither strictly increasing nor strictly decreasing and then strictly increasing. Then this case does not exist. 
 
 \item Assume $s_0^{a} s_1$ is in block $[m_1^{a}, n_1]$ and $s_0^{a'} s_1$ is in block $[m_2^{a'}, n_2]$. In order to move $s_0^{a'} s_1$ in block $[m_2^{a'}, n_2]$ next to $s_0^a s_1$ in block $[m_1^a, n_1]$ using only commutation relations, there should be no factors to the left of $s_0^{a'}s_1$ in block $[m_2^{a'}, n_2]$. Then $s_0^{a'} s_1$ are the first two factors in block $[m_2^{a'}, n_2]$. Then $m_2 =0$ by Definition \ref{canonical word}. And factors to the right of $s_0^{a'}s_1$ in block $[m_2^{a'}, n_2]$, if exist, must have indices larger than $1$. Then $n_2\geq 1$ by Definition \ref{canonical word}. Similarly, there should be no factors to the right of $s_0^{a} s_1$ in block $[m_1^a, n_1]$, since $s_1$ and $s_2$ do not commute with each other. Then $n_1=1$ by Definition \ref{canonical word}. Then we have $n_1\leq n_2$, which violates the rule $n>n_1>\cdots>n_r\geq 0$. Then this case does not exist. 

 \item Assume $s_0^{a}$ is in block $[m_1^a, n_1]$ and $s_1 s_0^{a'} s_1$ is in block $[m_2^{a'}, n_2]$. In order to move $s_0^a$ in block $[m_1^a, n_1]$ next to $s_1 s_0^{a'} s_1$ in block $[m_2^{a'}, n_2]$ using only commutation relations, there should be no factors to the right of $s_0^a$ in block $[m_1^a, n_1]$. Then $s_0^a$ is the only factor in block $[m_1^a, n_1]$. Then $m_1 = n_1 =0$ by Definition \ref{canonical word}. Since $s_0$ commutes with every generator other than $s_1$, there is no additional restriction on factors on both sides of $s_1 s_0^{a'} s_1$ in block $[m_2^{a'}, n_2]$. Then $n_2\geq 1$. Then we have $n_1< n_2$, which violates the rule $n>n_1>\cdots>n_r\geq 0$. Then this case does not exist.
 \end{enumerate}
 \end{enumerate}
 
Therefore, if $g$ is not fully commutative, then its canonical reduced word cannot be case $(a)$ or $(b)$. 
$\qedhere$
\end{proof}

\begin{proof}[Proof of Theorem~\ref{main theorem}]
By Proposition \ref{main prop}, $g$ is fully commutative in $G(m,1,n)$ if and only if its canonical reduced word is of case $(a)$ or $(b)$.  By Proposition \ref{prop: obs}, this happens if and only if the canonical reduced word of $\pi(g)$ in $G(2,1,n)$ is also of case $(a)$ or $(b)$, which occurs if and only if $\pi(g)$ is fully commutative in $G(2,1,n)$ by Theorem \ref{thrm: type B}. 
\end{proof}

Thus, the pattern avoidance of fully commutative elements in $G(2,1,n)$ naturally extends to fully commutative elements in $G(m,1,n)$, and we list them in Table \ref{patterns}.

\begin{table}[!htp]
\centering
\scalebox{0.7}{
\begin{tabular}{|c|c|l|}
\hline
Patterns & In $G(2,1,n)$ & \text{ In $G(m,1,n)$}\\
\hline
&&\\
$(-1, -2)$ & $\left[\begin{smallmatrix} -1&\\ &-1\end{smallmatrix}\right]$ & $ \left[\begin{smallmatrix} \omega^{a} &\\ &\omega^{b}\end{smallmatrix}\right] $\text{    $(a, b \neq 0)$}\\
&&\\
$(\pm 3,2,\pm 1)$ & $\left[\begin{smallmatrix} && \pm 1\\ &1&\\ \pm 1&& \end{smallmatrix}\right] $ &$\left[\begin{smallmatrix} && \omega^a\\ &1&\\ \omega^b&& \end{smallmatrix}\right]$\\
&&\\
$(\pm 3,\pm 1,-2)$ & $ \left[\begin{smallmatrix} && \pm 1\\ \pm 1&&\\ &-1& \end{smallmatrix}\right]$ & $ \left[\begin{smallmatrix} && \omega^a\\ \omega^b&&\\ &\omega^c& \end{smallmatrix}\right]$ \text{ $(c \neq 0)$} \\
&&\\
$(\pm 2,\pm 1,-3)$ & $\left[\begin{smallmatrix} & \pm 1&\\ \pm 1&&\\& &-1 \end{smallmatrix}\right]$ &$\left[\begin{smallmatrix} & \omega^a&\\ \omega^b&&\\ &&\omega^c \end{smallmatrix}\right]$ \text{ $(c \neq 0)$}\\
&&\\
$(\pm 2,-3,\pm 1)$ & $\left[\begin{smallmatrix} & \pm 1&\\ &&- 1\\ \pm 1&& \end{smallmatrix}\right]$ &$ \left[\begin{smallmatrix}& \omega^a&\\ &&\omega^b\\ \omega^c&& \end{smallmatrix}\right]$ \text{ $(b \neq 0)$}\\
&&\\
$(\pm 1,-3,-2)$ & $\left[\begin{smallmatrix} \pm 1&&\\ &&- 1\\ &-1&\end{smallmatrix}\right]$ &$ \left[\begin{smallmatrix}  \omega^a&&\\ &&\omega^b\\ &\omega^c& \end{smallmatrix}\right]$ \text{$(b,c \neq 0)$}\\
&&\\
\hline
\end{tabular}}
\vspace{0.1cm}
\caption{Pattern avoidance of fully commutative elements in $G(m,1,n)$}\label{patterns}
\end{table}

\begin{corollary}\label{cor: coeff}
In $G(m,1,n)$, the number of fully commutative elements is 
\[\sum\limits_{k=0}^{n} \alpha_{n,k} (m-1)^{k},\]
where $\alpha_{n,k}$ is the number of fully commutative elements with $k$ $-1$'s in $G(2,1,n)$.
\end{corollary}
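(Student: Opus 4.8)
The plan is to deduce this directly from Theorem~\ref{main theorem} by analyzing the fibers of the map $\pi\colon G(m,1,n)\to G(2,1,n)=B_n$. First I would observe that $\pi$ restricts to a surjection from the set of fully commutative elements of $G(m,1,n)$ onto the set of fully commutative elements of $B_n$: this is precisely the content of Theorem~\ref{main theorem}, which says $g$ is fully commutative if and only if $\pi(g)$ is. Consequently the fully commutative elements of $G(m,1,n)$ are partitioned into the fibers $\pi^{-1}(h)$ as $h$ ranges over the fully commutative elements of $B_n$, and it remains only to count each fiber.

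Next I would compute $|\pi^{-1}(h)|$ for a fixed fully commutative $h\in B_n$. Writing $h=[f(h);(a_1,\ldots,a_n)]$ with each $a_i\in\{0,1\}$ (recording $a_i=1$ exactly for the columns in which $h$ has a $-1$), an element $g\in G(m,1,n)$ satisfies $\pi(g)=h$ precisely when $g$ has underlying permutation $f(h)$, has weight $0$ in every column where $h$ has a $1$, and has a weight in $\{1,\ldots,m-1\}$ in every column where $h$ has a $-1$. If $h$ has exactly $k$ entries equal to $-1$, there are $m-1$ independent choices of weight in each of these $k$ columns and no choice elsewhere, so $|\pi^{-1}(h)|=(m-1)^k$. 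Moreover every such $g$ is automatically fully commutative by Theorem~\ref{main theorem}, since its image $\pi(g)=h$ is.

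Finally I would assemble the count. The number of fully commutative elements of $G(m,1,n)$ equals $\sum_h |\pi^{-1}(h)|$, the sum running over fully commutative $h\in B_n$. Grouping these $h$ according to their number $k$ of $-1$ entries, which necessarily lies in $\{0,1,\ldots,n\}$ because a monomial $n\times n$ matrix has exactly $n$ nonzero entries, and recalling that $\alpha_{n,k}$ counts the fully commutative elements of $B_n$ with $k$ entries equal to $-1$, this sum becomes $\sum_{k=0}^{n}\alpha_{n,k}(m-1)^{k}$, as claimed. (As a sanity check one can verify that substituting the refined enumeration of $B_n$ by number of sign changes recovers Theorem~\ref{thrm: FKLO}, and setting $m=2$ recovers Proposition~\ref{prop: type B}.)

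There is no serious obstacle here: the corollary is essentially a bookkeeping consequence of the main theorem. The only points requiring care are being precise that $\pi$ sends \emph{every} nontrivial root of unity (not only primitive ones) to $-1$, so that the fiber over a $-1$ column is all of $\{1,\ldots,m-1\}$ and not some smaller set, and noting that the fibers of $\pi$ over the fully commutative locus of $B_n$ genuinely partition the fully commutative locus of $G(m,1,n)$, which is what legitimizes splitting the count as a sum over $h$.
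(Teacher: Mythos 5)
Your proof is correct and follows essentially the same route as the paper: both arguments partition the fully commutative elements of $G(m,1,n)$ into the fibers of $\pi$ over the fully commutative elements of $B_n$ and show each fiber over an element with $k$ entries $-1$ has size $(m-1)^k$, invoking Theorem~\ref{main theorem} in both directions. The only cosmetic difference is that you justify the fiber count directly from the column weights of the matrix, whereas the paper reads it off from the $k$ blocks $[m_i,n_i]$ with $m_i\leq 0$ in the canonical reduced word; the content is the same.
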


\begin{proof}
Assume $g$ has $k$ nontrivial entries, then its image $\pi(g)\in G(2,1,n)$ has $k$ $-1$'s. Then the canonical reduced word of $\pi(g)$ has $k$ blocks $[m_i, n_i]$ where $m_i \leq 0$. This means that there are $k$ $s_0$'s in the reduced expression of $\pi(g)$. Then every $s_0$ in the reduced expression of $\pi(g)$ has $m-1$ distinct pre-images, i.e., $s_0$, $(s_0)^2$, $\ldots$, $(s_0)^{m-1}$, in the reduced expression of $g$. Therefore, $\pi(g)$ has $(m-1)^k$ distinct pre-images. By Theorem \ref{main theorem}, these pre-images are fully commutative in $G(m,1,n)$. Again by Theorem \ref{main theorem}, summing over all possible values of $k$ gives the total number of fully commutative elements in $G(m,1,n)$. 
\end{proof}

\begin{corollary}
In $G(2,1,n)$, there are
\begin{enumerate}
\item $C_{n+1} -1$ fully commutative elements with one $-1$, and
\item ${2n \choose n+k} - {2n \choose n+k+1}$ fully commutative elements with $k$ $-1$'s ($k\neq 1$). 
\end{enumerate}
\end{corollary}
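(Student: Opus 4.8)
The plan is to read off everything from the canonical reduced words of Theorem~\ref{thrm: type B}. Recall that $g\in B_n$ is fully commutative exactly when its canonical reduced word $w(g)=[m_1,n_1]\cdots[m_r,n_r]$ has one of two shapes: either
\[
m_1>\cdots>m_s>m_{s+1}=\cdots=m_r=0\quad\text{for some }s\le r
\]
(call such a word a \emph{$Z$-word}), or
\[
m_1>\cdots>m_{r-1}>-m_r>0
\]
(an \emph{$N$-word}). The first point is that the number of $-1$'s in the matrix of $g$ equals the number of blocks $[m_i,n_i]$ with $m_i\le 0$: each such block contributes exactly one factor $s_0$ to the reduced word, and that factor corresponds to one negative entry of the matrix, just as in the proofs of Proposition~\ref{prop: obs} and Corollary~\ref{cor: coeff} specialized to $m=2$. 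Hence a $Z$-word of $g$ has exactly $\#\{i:m_i=0\}=r-s$ zero blocks, which is the number of $-1$'s of $g$, while an $N$-word has exactly one ($m_r<0$). In particular, for $k\ne 1$ every fully commutative $g$ with $k$ negative entries has a $Z$-word with $r-s=k$, whereas a fully commutative $g$ with a single negative entry has either a $Z$-word with $r-s=1$ or an $N$-word.

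Accordingly, let $\beta_{n,k}$ be the number of $Z$-words in $B_n$ with exactly $k$ zero blocks and $\gamma_n$ the number of $N$-words, so that $\alpha_{n,k}=\beta_{n,k}$ for $k\ne 1$ and $\alpha_{n,1}=\beta_{n,1}+\gamma_n$. The crux is the claim
\[
\beta_{n,k}=\binom{2n}{n+k}-\binom{2n}{n+k+1}\qquad\text{for all }k\ge 0 .
\]
A $Z$-word is precisely the following data: a strictly decreasing sequence $n_1>\cdots>n_r$ in $\{0,\dots,n-1\}$; a split point $s$, after which the $k=r-s$ blocks all carry weight $0$; and strictly decreasing weights $m_1>\cdots>m_s\ge 1$ on the first $s$ blocks with $m_i\le n_i$. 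I would prove the claim by a bijection between such data and lattice paths of length $2n$ with steps $\pm 1$ that start at the origin, stay weakly above the $x$-axis, and end at height $2k$; the number of those paths is $\binom{2n}{n+k}-\binom{2n}{n+k+1}$ by the reflection principle. The path is built by scanning the positions $n-1,n-2,\dots,0$ and converting each position --- according to whether it is unused, starts a positive block of a given weight, or starts a zero block --- into a short run of up- and down-steps, arranged so that the $k$ trailing zero blocks contribute the closing ascent of height $2k$, while the conditions $m_i\le n_i$ together with the strict monotonicities translate exactly into the path's never dropping below the axis. (One could instead establish the claim via a recursion for $\beta_{n,k}$ in $n$, or by locating the number of zero blocks as a statistic in Stembridge's plane-partition encoding of canonical words and re-deriving the enumeration.) I expect this enumeration --- making the bijection precise and checking the nonnegativity condition --- to be the main obstacle; the rest is formal.

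Granting the claim, part~(2) is immediate: for $k\ne 1$, $\alpha_{n,k}=\beta_{n,k}=\binom{2n}{n+k}-\binom{2n}{n+k+1}$; for $k=0$ this reads $C_n$, consistent with the fact that the fully commutative elements of $B_n$ without negative entries are exactly the $321$-avoiding elements of the parabolic subgroup $\Symm_n$. For part~(1), summing the claim over $k$ telescopes to $\sum_{k\ge0}\beta_{n,k}=\binom{2n}{n}=(n+1)C_n$; hence by Stembridge's enumeration of fully commutative elements in $B_n$ (Proposition~\ref{prop: type B}), $\gamma_n=\bigl((n+2)C_n-1\bigr)-(n+1)C_n=C_n-1$. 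Therefore
\[
\alpha_{n,1}=\beta_{n,1}+\gamma_n=\left(\binom{2n}{n+1}-\binom{2n}{n+2}\right)+\bigl(C_n-1\bigr),
\]
and the elementary identity $\binom{2n}{n+1}-\binom{2n}{n+2}=C_{n+1}-C_n$ --- equivalently $(n+1)C_n-\binom{2n}{n+2}=C_{n+1}$, which follows from $\binom{2n+2}{n+1}=\tfrac{2(2n+1)}{n+1}\binom{2n}{n}$ after cancellation --- gives $\alpha_{n,1}=(C_{n+1}-C_n)+(C_n-1)=C_{n+1}-1$.
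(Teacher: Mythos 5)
Your reduction of the corollary to the single enumerative claim $\beta_{n,k}=\binom{2n}{n+k}-\binom{2n}{n+k+1}$ is sound, and so is the surrounding bookkeeping: the identification of the number of $-1$'s with the number of non-positive blocks, the fact that $N$-words contribute only to $k=1$, the telescoping $\sum_k\beta_{n,k}=\binom{2n}{n}=(n+1)C_n$ which combined with Stembridge's total gives $\gamma_n=C_n-1$, and the final Catalan identity. This is also a genuinely different route from the paper's: there the corollary is proved by substituting the claimed coefficients into Corollary \ref{cor: coeff}, checking $n=2$ by hand, and for $n\ge 3$ verifying coefficient-by-coefficient (via Chu--Vandermonde) that the resulting polynomial in $m$ agrees with the Feinberg--Kim--Lee--Oh count of Theorem \ref{thrm: FKLO}; the canonical words are never counted directly. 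Your approach, if completed, would be self-contained and would re-derive Theorem \ref{thrm: FKLO} rather than lean on it.

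However, the central claim is exactly where your argument stops, and the sketch you give does not work as stated. Encoding each position $j\in\{n-1,\dots,0\}$ by two $\pm1$ steps (one recording $j\in\{n_1,\dots,n_r\}$, one recording $j\in\{m_1,\dots,m_s\}$) does make the height after position $j$ equal to twice the domination surplus, hence nonnegative at even times; but the intermediate odd-time heights can equal $-1$. Already for $n=2$, $k=0$ the two $Z$-words (the identity and $[1,1]$) map under this encoding to $DUDU$ and $UDDU$, both of which dip to $-1$, so the image is not the set of paths weakly above the axis --- and the distinction matters, since paths to height $2k$ staying $\ge 0$ number $\binom{2n}{n+k}-\binom{2n}{n+k+1}$ while those staying $\ge -1$ number $\binom{2n}{n+k}-\binom{2n}{n+k+2}$. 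There is also the boundary asymmetry that the $n_i$ range over $\{0,\dots,n-1\}$ while the positive $m_i$ range over $\{1,\dots,n-1\}$, which any position-by-position encoding must absorb. The claim itself is true (it is equivalent to the statement being proved, and checks out for small $n$), and it can be established by a reflection/determinant argument for pairs of dominating subsets or by a recursion in $n$; but as written the heart of the proof is a sketch whose most natural implementations fail, so the argument is incomplete.
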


\begin{proof}
By Corollary \ref{cor: coeff}, it suffices to show that in $G(m,1,n)$, the number of fully commutative elements is \begin{equation}\label{equ: formula} \sum\limits_{k= 0}^{n}\left( {2n \choose n+k} - {2n \choose n+k+1}\right) (m-1)^k + (C_{n} - 1)(m-1).\end{equation}
 When $n=2$, fully commutative elements with two nontrivial entries are of the look $\begin{bmatrix} & \star\\ \star&\end{bmatrix}$, with one nontrivial entry can have one of four looks: $\begin{bmatrix} \star &\\ &1\end{bmatrix}$, $\begin{bmatrix} 1&\\ &\star\end{bmatrix}$, $\begin{bmatrix} &1\\ \star&\end{bmatrix}$ and $\begin{bmatrix} &\star\\ 1&\end{bmatrix}$, and with no nontrivial entry are exactly the ones that are fully commutative in $\Symm_2$, and there are $C_2=2$ of them. Since each $\star$ has $m-1$ choices, the total number of fully commutative elements in $G(m,1,2)$ is 
\[
1\cdot (m-1)^2 + 4\cdot (m-1)^1 + C_2\cdot (m-1)^0 = m^2 +2m -1,
\]
which is $(\ref{equ: formula})$ evaluated when $n=2$.
 
 When $n\geq 3$, we show that $(\ref{equ: formula})$ agrees with Theorem~\ref{thrm: FKLO}, i.e., for positive integers $m$ and $n$, the following equality is true:
 {{\footnotesize \[m(m-1)\sum\limits_{s=0}^{n-2} \frac{(n+s)! (n-s+1)}{s!(n+1)!} m^{n-2-s} +(2m-1)C_n -(m-1) =\sum\limits_{k= 0}^{n} \left({2n \choose n+k} - {2n \choose n+k+1} \right)(m-1)^k + (C_{n} - 1)(m-1).\] }}

Removing identical terms on both sides, we need to show \[m(m-1)\sum\limits_{s=0}^{n-2} \frac{(n+s)! (n-s+1)}{s!(n+1)!} m^{n-2-s} +mC_n =\sum\limits_{k= 0}^{n} \left( {2n \choose n+k} - {2n \choose n+k+1}\right) (m-1)^k .\] It suffices to show that $[m^j]\mathrm{LHS} = [m^j]\mathrm{RHS}$ for $j=0,1,2, \ldots, n$ where $[m^j]$ denotes the coefficient of the term $m^j$. We do this in three cases. 

$(1)$ When $j=0$,
 $[m^0]\mathrm{LHS} =0$ and $ [m^0]\mathrm{RHS} =\sum\limits_{k=0}^{n}\left({2n \choose n+k} - {2n \choose n+k+1}\right)(-1)^k.$ Let $S = \sum\limits_{k=0}^{n} (-1)^k {2n \choose n+k}$. Then 
\begin{equation*}
\begin{split}
2S & = \left[(-1)^n {2n \choose 0} + (-1)^{n-1} {2n \choose 1} + \cdots + {2n \choose n}\right]+\left[ {2n \choose n} + (-1) {2n \choose n+1} + \cdots + (-1)^{n} {2n \choose 2n}\right]\\
& = (-1)^n \left[ {2n \choose 0} - {2n \choose 1} +\cdots +{2n \choose n}\right] + {2n \choose 2n}= {2n \choose n}.\\
\end{split}
\end{equation*}
 Similarly, let $T = \sum\limits_{k=0}^{n} (-1)^k {2n \choose n+k+1} $. Then 
 \begin{equation*}
\begin{split}
2T & = \left[(-1)^{n-1} {2n \choose 0} + (-1)^{n-2} {2n \choose 1} + \cdots + {2n \choose n-1}\right]+ \left[  {2n \choose n+1} -{2n \choose n+2} + \cdots + (-1)^{n-1} {2n \choose 2n}\right]\\
& = (-1)^{n-1} \left[ {2n \choose 0} - {2n \choose 1} +\cdots +{2n \choose 2n}\right] + {2n \choose n}= {2n \choose n}.\\
\end{split}
\end{equation*}
Since $ S=T=\frac{1}{2} {2n \choose n}$, then $[m^0]\mathrm{RHS} = S-T =0=[m^0]\mathrm{LHS}$.
 
$(2)$ When $j=1$, $[m^1]\mathrm{LHS} = -\frac{(2n-2)! 3}{(n-2)!(n+1)!}+C_n= \frac{1}{n-1}{{2n-2} \choose n}$ and
\begin{equation*} 
\begin{split}
[m^1]\mathrm{RHS}& = \sum\limits_{k=0}^{n}\left({2n \choose n+k} - {2n \choose n+k+1}\right)k(-1)^{k-1} \\
 & =\sum\limits_{k=0}^n (n+k-n){2n \choose n+k}(-1)^{k-1} +\sum\limits_{k=0}^n (n+k+1-n-1){2n \choose n+k+1}(-1)^{k}\\
 & = 2n \sum\limits_{k=0}^{n}(-1)^{k-1}{2n-1 \choose n+k-1} +n \sum\limits_{k=0}^{n}(-1)^{k}{2n \choose n+k}\\ 
 &\qquad +2n \sum\limits_{k=0}^{n} (-1)^k {2n-1 \choose n+k} -(n+1)\sum\limits_{k=0}^{n} (-1)^k {2n \choose n+k+1}.\\
\end{split}
\end{equation*}

From part $(1)$, we have $\sum\limits_{k=0}^{n} (-1)^k {2n \choose n+k} = \sum\limits_{k=0}^{n} (-1)^k {2n \choose n+k+1}=\frac{1}{2}{2n \choose n}.$ Also, it is known that $\sum\limits_{i=0}^{N} (-1)^i {n \choose i}=(-1)^N {n-1 \choose N}$. Then 
\begin{equation*}
\begin{split} 
[m^1]\mathrm{RHS}& = 2n (-1)^{n-1}\left[(-1)^{n-1}{2n-2 \choose n-1}\right] + \frac{n}{2}{2n \choose n}+ 2n (-1)^{n-1}\left[ (-1)^n {2n-2 \choose n}\right] -\frac{n+1}{2}{2n \choose n}\\
& = 2n \left[ {2n-2 \choose n-1} - {2n-2 \choose n} \right]-\frac{1}{2}{2n \choose n}\\
& = \frac{2n}{n-1}{2n-2 \choose n} - \frac{2n-1}{n-1}{2n -2 \choose n}\\
& = \frac{1}{n-1} {2n-2 \choose n}\\
&=[m^1]\mathrm{LHS} .
\end{split}
\end{equation*}

$(3)$ When $j\geq 2$, $[m^j]\mathrm{LHS} =\frac{(2n-j)!(j+1)}{(n-j)!(n+1)!} - \frac{(2n-j-1)!(j+2)}{(n-j-1)!(n+1)!} = \frac{j+1}{n+1}{2n-j \choose n} - \frac{j+2}{n+1}{2n-j-1 \choose n}= \frac{j+1}{n+1}{2n-j-1 \choose n-1} - \frac{1}{n+1} {2n-j-1 \choose n}$. Since ${k\choose j} = (-1)^{k-j} {-j-1 \choose k-j} $, then
\begin{equation*}
\begin{split}
[m^j]\mathrm{RHS} &= \sum\limits_{k=0}^{n}\left({2n \choose n+k} - {2n \choose n+k+1}\right){k \choose j} (-1)^{k-j}\\
& =  \sum\limits_{k=j}^{n}\left({2n \choose n-k} - {2n \choose n-k-1}\right)\left[ (-1)^{k-j} {-j-1 \choose k-j} \right] (-1)^{k-j}\\
& = \sum\limits_{k=j}^{n}\left[ {2n \choose n-k} {-j-1 \choose k-j} - {2n \choose n-k-1}{-j-1 \choose k-j}\right].\\
\end{split}
\end{equation*}
Recall the Chu-Vandermonde identity $\sum\limits_{i=0}^{c} {a \choose i}{b \choose c-i} = {a+b \choose c}$ for general complex-valued $a$ and $b$ and any non-negative integer $c$. Then letting $c = n-j$, $i = k-j $, $a=-j-1$ and $b=2n$ gives
\[\sum\limits_{k=j}^{n}{2n \choose n-k}{-j-1 \choose k-j} = {2n-j-1 \choose n-j}\]
and letting $c=n-j-1$, $i = k-j$, $a=-j-1$ and $b=2n$ gives
\[\sum\limits_{k=j}^{n}{2n \choose n-k-1}{-j-1 \choose k-j} = 
{2n \choose n-n-1}{-j-1 \choose n-j}  + \sum\limits_{k=j}^{n-1}{2n\choose n-k-1}{-j-1 \choose k-j}=0 + {2n-j-1 \choose n-j-1}.\]

Then
\begin{equation*}
\begin{split}
[m^j]\mathrm{RHS} ={2n-j-1 \choose n-j} - {2n-j-1 \choose n-j-1}= {2n-j-1 \choose n-1} - {2n-j-1 \choose n} .
\end{split}
\end{equation*}
 
Therefore,
 \begin{multline*}
 [m^j]\mathrm{LHS}-[m^j]\mathrm{RHS} 
 =\left[\frac{j+1}{n+1}{2n-j-1 \choose n-1} - \frac{1}{n+1} {2n-j-1 \choose n}\right] \\- \left[{2n-j-1 \choose n-1} - {2n-j-1 \choose n} \right]
 =0.\qedhere
 \end{multline*}   
\end{proof}

\begin{remark}
Observe that the leading coefficient of $(\ref{equ: formula})$ is $1$. This means that the fully commutative elements in $G(m,1,n)$ with $n$ nontrivial entries have canonical reduced words $[0^{a_1}, n-1][0^{a_2}, n-2]\cdots [0^{a_{n-1}}, 1][0^{a_{n}}, 0^{a_n}]$ with the same underlying permutation, namely the reverse identity matrix.

One can also see this directly from Proposition \ref{main prop}. In order to create elements with $n$ nontrivial entries, the $m_i$'s in the canonical reduced words $[m_1^{a_1}, n_1]\cdots [m_n^{a_n}, n_n]$ must be non-positive. Since these elements are fully commutative, then their canonical reduced words are of case $(b)$ in Proposition \ref{main prop}, i.e., all $m_i$'s are $0$. Since $n>n_1>n_2>\cdots >n_n\geq 0$, then the desired elements have canonical reduced words of the form \[[0^{a_1}, n-1][0^{a_2}, n-2]\cdots [0^{a_{n-1}}, 1][0^{a_{n}}, 0^{a_n}].\]
\end{remark}

\section{Fully commutative elements in other groups}
\label{sec:open}

In this section, we discuss full commutativity in groups $G(m,m,n)$ and Shephard groups. We propose some conjectures and questions as future directions. 
\subsection{\textit{\textbf{G(m,m,n)}}}
Fully commutative elements in $G(m,m,n)$ are harder to enumerate and to characterize. The recent work by Feinberg-Kim-Lee-Oh studies elements in $G(m,m,n)$ that are fully commutative in $G(m,1,n)$. Note that such elements are not necessarily fully commutative in $G(m,m,n)$. So their counting formula does not recover Stembridge's result in $D_n$. In this section, we propose a few open questions regarding fully commutative elements in $G(m,m,n)$. 

Let $s_1, s_2, \ldots, s_{n-1}$ be simple transpositions, i.e., $s_i  =[(i\, i+1); (0, \ldots, 0)]$, and $s_{\bar{1}}= [(12); (-1,1,0,\ldots, 0)]$. For $m\geq 2$ and $n\geq 3$, the group $G(m,m,n)$ can be generated by $s_{\bar{1}}, s_1, s_2, \ldots, s_{n-1}$ with defining relations \cite{BMR}:
\begin{center}
\begin{tabular}{rll}
$(s_1s_{\bar{1}})^m= s_{\bar{1}}^2 = s_i^2 $ &$=1$ & for $1\leq i\leq n-1$,\\
$s_i s_j$ & $= s_j s_i $ & for $i+1 < j\leq n-1$, \\
$s_i s_{\bar{1}}$ & $ = s_{\bar{1}} s_i$& for $1< i \leq n-1$,\\
$s_{i+1}s_i s_{i+1}$ &$ = s_i s_{i+1}s_i$ & for $1\leq i \leq n-2$,\\
$s_{\bar{1}} s_2 s_{\bar{1}}$ & $= s_2 s_{\bar{1}} s_2$&\\
$(s_{\bar{1}} s_1 s_2)^2 $ & $ = (s_2 s_{\bar{1}} s_1)^2 $.&
\end{tabular}
\end{center}
We refer to this set of generators  $S^c=\{s_{\bar{1}}, s_1, s_2, \ldots, s_{n-1} \}$ as the \emph{classical generating set}. 

Stembridge characterized fully commutative elements by pattern avoidance in $D_n=G(2,2,n)$ with the classical generating set. Equivalent result was also obtained by Fan \cite[$\S 7$]{Fan}.

\begin{theorem}[{\cite[Thrm.~10.1]{S97}}]\label{thrm: type D}
For $g\in D_{n}$, the following are equivalent.
\begin{enumerate}
\item $g$ is fully commutative.
\item $g$ avoids all patterns $(a, b,c)$ such that $|a|>b>c$ or $-b>|a|>c$.
\end{enumerate}
\end{theorem}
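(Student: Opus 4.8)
The plan is to reduce full commutativity to a purely local, braid-move condition and then translate that condition into the signed-permutation pattern language, following the template of the type $B$ argument (Theorem~\ref{thrm: type B}) but with the diagonal generator $s_0$ replaced by the fork $\{s_1, s_{\bar{1}}\}$ of the type $D$ diagram. Every braid relation of the Coxeter group $D_n$ has order $3$: the edges of the diagram are $\{s_{\bar{1}}, s_2\}$, $\{s_1, s_2\}$, and $\{s_i, s_{i+1}\}$ for $2 \le i \le n-2$, while $s_1$ and $s_{\bar{1}}$ commute. By Stembridge's general criterion for Coxeter groups, $g \in D_n$ is therefore fully commutative if and only if no reduced word for $g$ contains a factor $s\,s'\,s$ for an edge $\{s, s'\}$. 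The whole problem is thus to decide, from the matrix form of $g$, whether commutation moves can expose such a length-$3$ braid.

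I would carry this out in two movements. First, I would establish that full commutativity in $D_n$ is hereditary under pattern containment: restricting $g$ to a subset of rows and columns and flattening the result to an element of a smaller $D_k$ cannot destroy full commutativity. The clean way to see this is through the heap of a reduced word, where a forced braid corresponds to a convex three-element chain $s, s', s$ in the heap and where passing to a pattern corresponds to passing to a sub-heap; a braidable sub-heap in the pattern lifts to a braidable sub-heap in $g$. Granting heredity, the theorem reduces to identifying the minimal non-fully-commutative elements, i.e.\ the smallest signed permutations that force a length-$3$ braid. Second, I would classify these minimal obstructions. A chain braid $s_i s_{i+1} s_i$ is forced exactly by a signed $321$-type descent among three values, producing the patterns with $|a| > b > c$; a fork braid $s_{\bar{1}} s_2 s_{\bar{1}}$ (or $s_1 s_2 s_1$) is forced exactly when a sign change is tangled with a descent, producing the patterns with $-b > |a| > c$. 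Matching each minimal obstruction against Table~\ref{patterns} and checking that the list is complete yields the stated equivalence.

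The decisive structural point---and the reason the $D_n$ list is the $B_n$ list of Theorem~\ref{thrm: type B} with the single pattern $(-1, -2)$ removed---is that $D_n$ has no length-$4$ braid. In type $B$ the factor $s_1 s_0 s_1 s_0$ coming from the order-$4$ relation $(s_0 s_1)^4 = 1$ is the obstruction that forbids $(-1, -2)$; in $D_n$ the two sign-introducing generators $s_1, s_{\bar{1}}$ instead commute, so two negative diagonal entries can be created by a single, commuting use of the fork and the element stays fully commutative. I expect the main obstacle of the proof to be precisely the fork bookkeeping in the second movement: because every sign in $D_n$ is introduced two at a time through $\{s_1, s_{\bar{1}}\}$, subject to the braid $s_{\bar{1}} s_2 s_{\bar{1}} = s_2 s_{\bar{1}} s_2$, deciding when a braid at $s_2$ actually becomes available requires tracking which of $s_1$, $s_{\bar{1}}$ occurs and in what order relative to the $s_2$-strand. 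This is where essentially all of the case analysis lives, and where one must verify both that the $-b > |a| > c$ patterns are genuinely forced and that $(-1, -2)$ is genuinely permitted.
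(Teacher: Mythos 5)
First, a point of context: the paper never proves this statement---it is quoted from Stembridge \cite{S97}, whose proof runs through canonical reduced words obtained from coset representatives of $D_n/D_{n-1}$, exactly parallel to the type $B$ and $G(m,1,n)$ machinery that the present paper does reproduce (Theorem \ref{thrm: type B}, Proposition \ref{main prop}). Your route (word criterion, then heredity, then minimal obstructions) is genuinely different, but it has gaps that are not merely expository. The heredity step is ill-posed as stated: a submatrix of an even-signed permutation matrix need not have an even number of $-1$'s, so ``flattening the result to an element of a smaller $D_k$'' is not available, and most of the forbidden patterns (e.g.\ $(2,1,-3)$, which has a single $-1$) are not elements of any $D_k$; hence ``the smallest signed permutations that force a braid'' is not even the right class of obstructions. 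Moreover, the justification offered---that passing to a pattern corresponds to passing to a sub-heap---is asserted, not proved: deleting a row and column of the matrix has no evident action on the heap of a reduced word, and for elements that are not fully commutative the heap is not independent of the chosen word. That this step carries real content is exactly what Section \ref{sec:open} of the paper illustrates: the analogous heredity fails in $G(m,m,n)$ for $m\geq 3$ (see Table \ref{strange}; $[(23);(0,d,-d)]$ is fully commutative in $G(2d+1,2d+1,3)$ although it contains the non-fully-commutative $[(12);(d,-d)]$), so any correct argument must exploit a feature special to $m=2$ that your sketch never isolates.

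The second movement has the same problem in sharper form: the claims that a chain braid ``is forced exactly by a signed $321$-type descent'' and a fork braid ``is forced exactly when a sign change is tangled with a descent'' are the theorem itself, and ``checking that the list is complete'' is the entire difficulty. A complete proof must (i) exhibit, for each listed pattern, a reduced word containing a factor $ss's$ with $m(s,s')=3$, and (ii) show that an element avoiding every listed pattern admits a reduced word (in Stembridge's treatment, the canonical one coming from the coset decomposition) from which no such factor can be exposed by commutations; your proposal supplies a mechanism for neither direction. What you do get right: $D_n$ is simply laced, so Stembridge's criterion reduces everything to length-$3$ braids; and your explanation of why $(-1,-2)$ drops from the type $B$ list is correct, since $s_1$ and $s_{\bar{1}}$ commute and $\operatorname{diag}(-1,-1,1,\ldots,1)=s_1 s_{\bar{1}}$ has only the two commutation-equivalent reduced words $s_1 s_{\bar{1}}$ and $s_{\bar{1}} s_1$. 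But as written the proposal is an outline whose two load-bearing steps are, respectively, flawed and absent.
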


There is also a counting formula for fully commutative elements in $D_n$, with the classical generating set, obtained independently by Stembridge \cite{S97} and Fan \cite{Fan}.
\begin{proposition}[{\cite[Prop.~10.4]{S97}}]
In $D_n$, there are $\frac{n+3}{2}C_n -1$ fully commutative elements, where $C_n=\frac{1}{n+1}{2n \choose n}$ is the $n$th Catalan number.
\end{proposition}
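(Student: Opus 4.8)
Write $\mathrm{FC}(W)$ for the set of fully commutative elements of a Coxeter group $W$ with respect to its given generating set. The plan is to reduce the enumeration to the type $B$ counts already obtained. By Theorem~\ref{thrm: type D}, $g\in\mathrm{FC}(D_n)$ precisely when $g\in D_n$ and $g$ avoids every pattern $(a,b,c)$ with $|a|>b>c$ or $-b>|a|>c$; by Theorem~\ref{thrm: type B}(3), $\mathrm{FC}(B_n)$ is cut out by the \emph{same} family of three-element patterns together with the single additional pattern $(-1,-2)$. Hence $\mathrm{FC}(B_n)\cap D_n\subseteq\mathrm{FC}(D_n)$; writing $X_n$ for the set of elements of $B_n$ that avoid all those three-element patterns but contain $(-1,-2)$, and splitting $\mathrm{FC}(D_n)$ according to whether an element avoids $(-1,-2)$, one gets $|\mathrm{FC}(D_n)| = |\mathrm{FC}(B_n)\cap D_n| + |X_n|$.

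The crux is to identify $X_n$. First I would check that a signed permutation avoiding all the forbidden three-element patterns and having three or more negative entries must have the absolute values of its negative entries strictly decreasing along the columns: among the $3!$ relative orders of three negative entries, only the strictly decreasing one, of pattern type $(-3,-2,-1)$, is not itself a forbidden pattern. In particular every $g\in X_n$ has exactly two negative entries, occurring with increasing absolute value, and hence lies in $D_n$ automatically. A second short triple analysis forces the left-most negative entry of such a $g$ to be $-1$ placed in the first column: if the left-most negative sat in a column $p\ge 2$, the triple formed by an earlier positive entry together with the two negatives would have pattern type $(3,-1,-2)$, $(2,-1,-3)$, or $(1,-2,-3)$, each forbidden; and if $g(1)=-a$ with $a\ge 2$, the triple formed by $g(1)$, the value $+1$, and the other negative entry is forbidden (of type $(-2,1,-3)$ or $(-2,-3,1)$ according to the column order). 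Once $g(1)=-1$ and $g$ has just one more negative entry, no forbidden triple can involve the first column, since a forbidden pattern whose first entry has the least absolute value has its other two entries both negative. Therefore $g\mapsto g|_{\{2,\ldots,n\}}$ is a bijection from $X_n$ onto the set of fully commutative elements of $B_{n-1}$ with exactly one $-1$, which by the corollary above enumerating fully commutative elements of $B_n$ by their number of $-1$'s has cardinality $C_{(n-1)+1}-1 = C_n-1$; so $|X_n| = C_n - 1$.

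It remains to evaluate $|\mathrm{FC}(B_n)\cap D_n|$, the number of fully commutative elements of $B_n$ with an even number of $-1$'s. By the same corollary this equals $\sum_{k\text{ even}}\bigl({2n\choose n+k}-{2n\choose n+k+1}\bigr) = \sum_{i=0}^{n}(-1)^i{2n\choose n+i} = \frac12{2n\choose n} = \frac{n+1}{2}C_n$, the middle identity being the one established in case $(1)$ of the proof of that corollary. Adding the two contributions gives $|\mathrm{FC}(D_n)| = \frac{n+1}{2}C_n + (C_n-1) = \frac{n+3}{2}C_n - 1$.

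I expect the structural step to be the main obstacle: showing that the fully commutative elements of $D_n$ that are not already fully commutative in $B_n$ are exactly those obtained from a one-$(-1)$ fully commutative element of $B_{n-1}$ by prepending $-1$ in a new first column. This is where the three-element pattern bookkeeping is needed, and one must keep an eye on small $n$ and on the pattern $(1,-2,-3)$, which is forbidden for $\mathrm{FC}(D_n)$ by Theorem~\ref{thrm: type D} even though it is omitted from Table~\ref{patterns}, being redundant there once $(-1,-2)$ is also forbidden. A different route, the one used in \cite{S97} and \cite{Fan}, bypasses $X_n$ altogether: introduce a canonical reduced word for $D_n$ modeled on the one for $B_n$, characterise full commutativity through it, encode the admissible words as plane partitions, and evaluate the corresponding generating function; there the obstacle lies in setting up the type $D$ canonical word and its lattice-path model.
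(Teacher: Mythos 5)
Your argument is correct, and it is genuinely different from the proof the paper points to: the paper states this count as a citation of Stembridge (and Fan) without proof, and the cited argument proceeds by setting up canonical reduced words for $D_n$ analogous to the $[m_i,n_i]$ blocks in type $B$, characterizing full commutativity on those words, and enumerating the admissible words as plane partitions. You instead deduce the formula entirely from results already present in this paper: comparing the pattern characterizations in Theorems~\ref{thrm: type D} and~\ref{thrm: type B} to write $\mathrm{FC}(D_n)$ as the disjoint union of $\mathrm{FC}(B_n)\cap D_n$ with the set $X_n$ of triple-pattern-avoiders containing $(-1,-2)$, then pinning down $X_n$ by a short case analysis (only $(-3,-2,-1)$ survives among all-negative triples, forcing exactly two negative entries; the forbidden triples $(1,-2,-3)$, $(2,-1,-3)$, $(3,-1,-2)$ force the first negative into column $1$; and $(-2,1,-3)$, $(-2,-3,1)$ force its value to be $-1$), which yields the bijection with one-$(-1)$ fully commutative elements of $B_{n-1}$ and hence $|X_n|=C_n-1$, while the even-$k$ sum of the refined counts from the unlabeled corollary after Corollary~\ref{cor: coeff} gives $\tfrac12\binom{2n}{n}=\tfrac{n+1}{2}C_n$ for the other part. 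I checked the pattern bookkeeping, including your correct observation that $(\pm 1,-2,-3)$ is forbidden by Theorem~\ref{thrm: type D} even though it is omitted from Table~\ref{patterns} as redundant in the presence of $(-1,-2)$, and the numerics agree with Table~\ref{classical} at $m=2$, $n=3$. What your route buys is a self-contained derivation of the type $D$ count from the type $B$ refined enumeration, at the cost of taking the pattern characterization of Theorem~\ref{thrm: type D} as an input; the canonical-word route proves that characterization and the count together but requires the additional type $D$ coset/heap machinery.
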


\subsubsection{\textbf{Enumeration}}
We consider counting fully commutative elements in $G(m,m,n)$ with the classical generating set. 

\begin{prop}\label{prop: mm2}
When $n=2$, there are 
\begin{enumerate}
\item $4$ fully commutative elements in $G(2,2,2)$, i.e., every element is fully commutative in $G(2,2,2)$. 
\item $2m-1$ fully commutative elements in $G(m,m,2)$ when $m\geq 3$.
\end{enumerate}
\end{prop}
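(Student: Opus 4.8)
The plan is to exploit the fact that, with the classical generating set $S^{c}=\{s_{\bar 1},s_1,\dots,s_{n-1}\}$, the group $G(m,m,2)$ is simply the dihedral group $I_2(m)$ generated by the two involutions $s_{\bar 1}$ and $s_1$: when $n=2$ all the defining relations listed for $G(m,m,n)$ that involve $s_2,\dots,s_{n-1}$ are vacuous, so the presentation reduces to $s_{\bar 1}^2=s_1^2=(s_1s_{\bar 1})^m=1$. Since there are only two generators, the only pair that could possibly commute is $\{s_{\bar 1},s_1\}$ itself, and this is exactly what separates the two parts of the proposition.

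First I would recall the structure of $I_2(m)$. It has $2m$ elements: the identity; the longest element $w_0$ of length $m$; and, for each $k$ with $1\le k\le m-1$, exactly two elements of length $k$, whose reduced words are the two alternating words $s_{\bar 1}s_1s_{\bar 1}\cdots$ and $s_1s_{\bar 1}s_1\cdots$ of length $k$. The identity and each of these $2(m-1)$ intermediate elements has a unique reduced word, whereas $w_0$ has exactly the two reduced words $s_{\bar 1}s_1s_{\bar 1}\cdots$ and $s_1s_{\bar 1}s_1\cdots$, each with $m$ letters.

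For part $(2)$, assume $m\ge 3$. Then $s_{\bar 1}$ and $s_1$ do not commute — otherwise $s_1s_{\bar 1}$ would have order at most $2<m$ — so no commutation relation is available at all, and consequently an element of $G(m,m,2)$ is fully commutative if and only if it has a unique reduced word. By the previous paragraph this holds for the identity and for the $2(m-1)$ intermediate elements, but fails for $w_0$; hence there are $1+2(m-1)=2m-1$ fully commutative elements. For part $(1)$, when $m=2$ the group $G(2,2,2)=D_2$ is abelian, so $s_{\bar 1}s_1=s_1s_{\bar 1}$ and the two generators commute. The four elements are $1$, $s_{\bar 1}$, $s_1$, and $s_{\bar 1}s_1$; the first three each have a unique reduced word, while the two reduced words $s_{\bar 1}s_1$ and $s_1s_{\bar 1}$ of the last differ by a single commutation move. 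Thus all four elements are fully commutative.

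There is no serious obstacle here; the only point needing a line of care is the observation that, for a two-element generating set, full commutativity is equivalent to uniqueness of the reduced word precisely when the two generators fail to commute, which is what forces the case split between $m\ge 3$ and $m=2$.
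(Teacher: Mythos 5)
Your proof is correct and follows essentially the same route as the paper's: both arguments identify the longest element of the dihedral group $G(m,m,2)\cong I_2(m)$ as the unique element whose two alternating reduced words cannot be related by commutation when $m\geq 3$, giving $2m-1$ fully commutative elements, and observe that for $m=2$ the generators commute so all four elements are fully commutative. Your write-up is somewhat more explicit about the length structure of $I_2(m)$ and the equivalence of full commutativity with uniqueness of reduced words for non-commuting generators, whereas the paper simply exhibits the exceptional element in matrix form, but the underlying argument is the same.
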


\begin{proof}
\begin{enumerate}
\item Clear.
\item We know that $G(m,m,2)$ is generated by $s_1 = [\id; (0,0)]$ and $s_{\bar{1}} = [(12); (-1,1)]$. There is exactly one element that is not fully commutative element:$[\id; (d, d)]$ if $m=2d$ or $[(12); (d, -d)]$ if $m=2d+1$, with reduced expressions $s_1 s_{\bar{1}} s_1 s_{\bar{1}} \cdots = s_{\bar{1}} s_1 s_{\bar{1}} s_1 \cdots$. Since there are $2m$ elements in $G(m, m,2)$, then $2m-1$ of them are fully commutative. \qedhere
\end{enumerate} 
\end{proof}

\begin{proposition}
In $G(m,m,3)$, an element is fully commutative if and only if it has unique reduced expression.
\end{proposition}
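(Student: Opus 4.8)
The plan is to reduce the statement to the elementary fact that, for $m\geq 3$, no two of the three classical generators $s_{\bar{1}},s_1,s_2$ of $G(m,m,3)$ commute. (When $m=2$ the group is the Coxeter group $D_3$, in which $s_{\bar{1}}$ and $s_1$ \emph{do} commute and the behaviour is the one governed by Theorem~\ref{thrm: type D}; so the proposition is really a statement about $m\geq 3$.) The implication ``unique reduced expression $\Rightarrow$ fully commutative'' requires no hypothesis and is immediate from the definition: if $g$ has only one reduced expression $w$, then every reduced expression of $g$ equals $w$, and $w$ is obtained from $w$ by the empty sequence of commutation moves, so $g$ is fully commutative.

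For the converse I would first check that each of the pairs $\{s_1,s_2\}$, $\{s_{\bar{1}},s_2\}$, $\{s_{\bar{1}},s_1\}$ fails to commute. The braid relation $s_1s_2s_1=s_2s_1s_2$ forces $s_1s_2$ to have order $3$, so $s_1s_2\neq s_2s_1$, and likewise $s_{\bar{1}}s_2\neq s_2s_{\bar{1}}$ via $s_{\bar{1}}s_2s_{\bar{1}}=s_2s_{\bar{1}}s_2$. For $\{s_{\bar{1}},s_1\}$, a direct matrix computation in the reflection representation gives $s_1s_{\bar{1}}=\operatorname{diag}(\omega^{-1},\omega,1)$ in the standard basis $e_1,e_2,e_3$ of $\CC^3$, where $\omega=\exp(\frac{2\pi i}{m})$; this element has order exactly $m\geq 3$, hence is not an involution, hence $s_1s_{\bar{1}}\neq s_{\bar{1}}s_1$ (were they equal we would get $(s_1s_{\bar{1}})^2=s_1^2s_{\bar{1}}^2=1$, contradicting order $m$). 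Consequently $G(m,m,3)$ has no nontrivial commutation relation whatsoever.

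It then follows that if $g$ is fully commutative with reduced expression $w$, then every reduced expression $w'$ of $g$ is obtained from $w$ by commutation moves; but since no two generators commute, there are no such moves available, so $w'=w$ and $g$ has a unique reduced expression. The only step carrying any content is the order computation for $s_1s_{\bar{1}}$ — this is precisely where the hypothesis $m\geq 3$ enters, and where the equivalence genuinely fails at $m=2$; everything else is a direct unwinding of the definition of full commutativity.
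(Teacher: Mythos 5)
Your proof is correct and follows essentially the same route as the paper's, which likewise observes that no two of the generators $s_{\bar{1}}, s_1, s_2$ commute and concludes immediately; you simply carry out the verification (the order of $s_1s_{\bar{1}}$) that the paper leaves implicit. Your added caveat about $m=2$ is apt: in $D_3$ the generators $s_1$ and $s_{\bar{1}}$ do commute (e.g.\ $s_1s_{\bar{1}}=s_{\bar{1}}s_1$ is fully commutative with two reduced expressions), so the paper's one-line justification, and indeed the statement itself, really requires $m\geq 3$.
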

\begin{proof}
When $n=3$, $G(m,m,3)$ is generated by $s_1=[(12); (0, 0, 0)]$, $s_2=[(23); (0, 0, 0)]$ and $s_{\bar{1}} = [(12); (-1,1,0)]$. Since no two of the generators commute, the result follows.
\end{proof}

The present mapping method, from $G(m,1,n)$ to $G(2,1,n)$, is not very useful in counting fully commutative elements in $G(m,m,n)$. At least one obstacle comes from mapping an element in $G(m,m,n)$ to an element in $G(2,2,n)$, i.e., replacing every nontrivial entry with $-1$. Consider $[(123); (1, 2,0)]$ in $G(3,3,3)$, which is fully commutative with unique reduced expression $s_1 s_{\bar{1}} s_2 s_1$. The image $[(123);(1,1,0)]$ is not fully commutative in $G(2,2,3)$, since it has reduced expressions $s_1 s_{\bar{1}} s_2 s_1=s_{\bar{1}} s_1 s_2 s_1=s_{\bar{1}} s_2 s_1 s_2$. Another challenge concerns elements such as $[(23);(1,1,1)]\in G(3,3,3)$, which is fully commutative with unique reduced expression $s_1 s_{\bar{1}} s_2 s_1s_{\bar{1}}$. However, the image $[(23);(1,1,1)]$ does not exist in $G(2,2,3)$. 

In Table \ref{classical}, we enumerate fully commutative elements in $G(m,m,3)$ for small values of $m$ and list them by length $\ell$, i.e., the minimum number of generators needed in their reduced expressions.

\begin{table}[!htbp]
\centering
\scalebox{0.5}{
\begin{tabular}{|cccccccccc|}
\hline $\ell$ & $m=2$&$m=3$ & $m=4$ & $m=5$ & $m=6$ & $m=7$ & $m=8$ & $m=9$ & $m=10$ \\
\hline 0 &1& 1 & 1 & 1 & 1 & 1 & 1 & 1 & 1 \\
\hline 1 & 3&3 & 3 & 3 & 3 & 3 & 3 & 3 & 3 \\
\hline 2 & 5&6 & 6 & 6 & 6 & 6 & 6 & 6 & 6 \\
\hline 3 & 4&6 & 8 & 8 & 8 & 8 & 8 & 8 & 8 \\
\hline 4 &1& 6 & 10 & 12 & 12 & 12 & 12 & 12 & 12 \\
\hline 5 && 6 & 12 & 16 & 18 & 18 & 18 & 18 & 18 \\
\hline 6 && & 10 & 16 & 20 & 22 & 22 & 22 & 22 \\
\hline 7 && & & 16 & 22 & 26 & 28 & 28 & 28 \\
\hline 8 && & & 2 & 18 & 24 & 28 & 30 & 30 \\
\hline 9 && & & & 4 & 26 & 32 & 36 & 38 \\
\hline 10 && & & & & 10 & 26 & 32 & 36 \\
\hline 11 && & & & & & 14 & 36 & 42 \\
\hline 12 && & & & & & & 18 & 34 \\
\hline 13 && & & & & & & 4 & 24 \\
\hline 14 & && & & & & & & 8 \\
\hline
\hline total &14 &28& 50&80&112 &156 & 198&254 &310 \\
\hline
\end{tabular}
}
\vspace{0.1cm}
\caption{f.c. elements in $G(m,m,3)$ with classical generating set}
\label{classical}
\end{table}

\vspace{-0.3in}
 
\subsubsection{\textbf{Pattern avoidance}}

We now consider characterizing fully commutative elements with the classical generating set by pattern avoidance in $G(m,m,n)$. Theorem \ref{thrm: type D} implies that a fully commutative element in $G(2,2,n+1)$ does not contain a non fully commutative element in $G(2, 2, n)$, as a submatrix. This behavior does not extend to $m>2$. For example,  $[(12); (d, -d)]$ is not fully commutative in $G(2d+1, 2d+1, 2)$, by Proposition \ref{prop: mm2}, but $[(23); (0, d, -d)]$ is fully commutative in $G(2d+1, 2d+1, 3)$ $(d\geq 2)$, with unique reduced expression $s_{\bar{1}} s_2 (s_1 s_{\bar{1}})^{d-1} s_1 s_2 s_{\bar{1}}$. 

Furthermore, one can find this kind of fully commutative elements in both $G(3,3,4)$ and $G(4,4,4)$, as listed in Table \ref{strange}. In fact, Table \ref{strange} lists all such elements in both groups. Current evidence suggests that the number of such strange fully commutative elements in $G(m,m,n+1)$ is very small.

\begin{center}
\begin{table}[!htbp]
\scalebox{0.65}{
\begin{tabular}{|lll|l|}
\hline
\text{ $G(3,3,4)$: fully commutative } && & \text{ $G(3,3,3)$: non fully commutative }\\
$\left[ \begin{smallmatrix} &1&&\\ &&&\omega^2 \\ &&\omega^2 & \\ \omega^2&&&\end{smallmatrix}\right]$  & &
 $\left[ \begin{smallmatrix} 1&&&\\ &&&\omega^2\\ &&\omega^2&\\ & \omega^2 &&\end{smallmatrix}\right]$  &  $\left[ \begin{smallmatrix} &&\omega^2\\ &\omega^2&\\ \omega^2&&\end{smallmatrix}\right]$\\
 $s_{\bar{1}} s_1 s_2 s_{\bar{1}} s_3 s_1 s_2 s_{\bar{1}}$ &  &
 $s_{\bar{1}} s_1 s_2 s_{\bar{1}} s_3 s_1 s_2 s_{\bar{1}} s_1$ &
 $s_{\bar{1}} s_1  s_2 s_{\bar{1}} s_2= s_{\bar{1}} s_1 s_{\bar{1}} s_2 s_{\bar{1}}$\\
&&&\\
$\left[ \begin{smallmatrix} &&&\omega\\ 1&&&\\ &&\omega&\\ &\omega&&\end{smallmatrix}\right]$ & &
$\left[ \begin{smallmatrix} 1&&&\\ &&&\omega\\ &&\omega&\\  &\omega&&\end{smallmatrix}\right]$ &
$\left[ \begin{smallmatrix} &&\omega\\ &\omega&\\ \omega&&\end{smallmatrix}\right]$\\
$s_{\bar{1}} s_2 s_1 s_{\bar{1}} s_3 s_2 s_1 s_{\bar{1}}$ & &
$ s_1 s_{\bar{1}} s_2 s_1 s_{\bar{1}} s_3 s_2 s_1 s_{\bar{1}}$&
$s_{\bar{1}} s_2  s_1 s_{\bar{1}} s_1=s_{\bar{1}} s_2 s_{\bar{1}} s_1 s_{\bar{1}}$\\
\hline
\text{ $G(4,4,4)$: fully commutative } && & \text{ $G(4,4,3)$: non fully commutative }\\
$\left[\begin{smallmatrix} &&&\omega^2 \\ 1&&&\\ &&\omega&\\ &\omega&&\end{smallmatrix}\right]$ &&&
$\left[\begin{smallmatrix} &&\omega^2 \\ &\omega&\\ \omega&&\end{smallmatrix}\right]$\\
$s_{\bar{1}} s_1 s_{\bar{1}} s_2 s_1 s_{\bar{1}} s_3 s_2 s_1 s_{\bar{1}}$ &&&
$s_1 s_{\bar{1}} s_2 s_1 s_{\bar{1}} s_1 s_2=s_{\bar{1}} s_1 s_{\bar{1}} s_2 s_1 s_{\bar{1}}s_1$\\
&&&\\
$\left[\begin{smallmatrix} &1&&\\ &&&\omega^3\\ &&\omega^3 &\\ \omega^2&&&\end{smallmatrix}\right]$ &&&
$\left[\begin{smallmatrix} &&\omega^3\\ &\omega^3&\\ \omega^2&&\end{smallmatrix}\right]$\\
$s_{\bar{1}} s_1 s_2 s_{\bar{1}} s_3 s_1 s_2 s_{\bar{1}} s_1 s_{\bar{1}}$ &&&
$s_{1} s_{\bar{1}} s_1 s_2 s_{\bar{1}} s_1 s_{\bar{1}}=s_2 s_1 s_{\bar{1}} s_1 s_2 s_{\bar{1}} s_1$\\
\hline
\end{tabular}}
\vspace{0.1cm}
\caption{f.c. elements that contain a non f.c. element}
\label{strange}
\end{table}
\end{center}

Comparing Theorem \ref{thrm: type D} with Theorem \ref{thrm: type B}, one notices that the pattern avoidance in $G(2,2,n)$ is also part of the pattern avoidance in $G(2,1,n)$ $(n\geq 3)$. This means that a fully commutative element in $G(2,2,n)$ is also fully commutative in $G(2,1,n)$.

This is not true in $G(m,m,n)$ when $m>2$. One can find elements that are fully commutative in $G(3,3,3)$ but not fully commutative in $G(3,1,3)$, such as $[\id; (1,2,0)]$ which is fully commutative in $G(3,3,3)$ with unique reduced expression $s_{\bar{1}}s_1$, but it is not fully commutative in $G(3,1,3)$ with reduced expressions $s_1 s_0^2 s_1 s_0$ and $s_0 s_1 s_0^2 s_1$.

Since Stembridge's result in $D_n=G(2,2,n)$ cannot be directly extended to $G(m,m,n)$, full commutativity in $G(m,m,n)$ with the classical generating set remains to be studied further.

\subsubsection{\textbf{Other generating sets}}
There are many possible choices of generating set for $G(m,m,n)$, none of which are Coxeter-like in the same way as in $G(m,1,n)$ (see the discussion preceding Lemma 4.2 in \cite{ChaDou} and \cite[$\S 3.7.2$]{Wil19}). Likely because of this phenomenon, full commutativity in $G(m,m,n)$ with the classical generating set is more challenging to investigate. In this section, we look at two other generating sets for $G(m,m,n)$.

\paragraph{\textbf{Affine generating set}}
Let $s_1$, $s_2$, \ldots, $s_{n-1}$ be simple transpositions, i.e., $s_i = [(i\, i+1); (0,\ldots, 0)]$, and $\widetilde{s}_n = [(1\, n); (-1, 0, \ldots, 0, 1)]$. Then $G(m,m,n)$ can be generated by $s_1$, $s_2$, $\ldots$, $s_{n-1}$, $\widetilde{s}_n$ with defining relations \cite{Shi-generic}: 
\begin{center}
\begin{tabular}{rll}
$(\widetilde{s}_n (s_1 s_2\cdots s_{n-1} \cdots s_2 s_1))^m=(\widetilde{s}_n)^2 = s_i^2 $ &$=1$ & for $1\leq i\leq n-1$,\\
$s_i s_j$ & $= s_j s_i $ & for $i+1 < j\leq n-1$, \\
$s_i \widetilde{s}_n$ & $ = \widetilde{s}_n s_i$& for $1<i < n-1$,\\
$s_{i+1}s_i s_{i+1}$ &$ = s_i s_{i+1}s_i$ & for $1\leq i \leq n-2$,\\
$s_i \widetilde{s}_n s_i$ & $= \widetilde{s}_n s_i \widetilde{s}_n$ & for $i=1,\, n-1$.\\
\end{tabular}
\end{center}
This generating set for $G(m,m,n)$ is the projection of the Coxeter generating set for the affine symmetric group $\widetilde{A}_n$. We refer to this set of generators $\widetilde{S}= \{ s_1, \ldots, s_{n-1}, \widetilde{s}_n\}$ as the \emph{affine generating set} of $G(m,m,n)$. 

Tables \ref{enumeration} and \ref{affine in 4} show the current enumerative evidence of fully commutative elements in $G(m,m,3)$ and $G(m,m,4)$. 

\begin{table}[!htbp]
\parbox{.45\linewidth}{
\centering
\scalebox{0.9}{
\begin{tabular}{|cccccc|}
\hline$\ell$  & $m=2$& $m=3$ & $m=4$ & $m=5$ & $m=6$ \\
\hline 0 &1& 1 & 1 & 1 & 1 \\
\hline 1 &3&3 & 3 & 3 & 3 \\
\hline 2 &6&6 & 6 & 6 & 6 \\
\hline 3 &  6& 6 & 6 & 6 & 6 \\
\hline 4 && 6 & 6 & 6 & 6 \\
\hline 5 && 6 & 6 & 6 & 6 \\
\hline 6 && & 6 & 6 & 6 \\
\hline 7 && & 6 & 6 & 6 \\
\hline 8 && & & 6 & 6 \\
\hline 9 && & & 6 & 6 \\
\hline 10  &&& & & 6 \\
\hline 11 && & & & 6 \\
\hline
\hline total &16&28 &40 & 52& 64 \\
\hline
\end{tabular}}
\vspace{0.1cm}
\captionsetup{format=myformat}
    \caption{f.c. elements in $G(m,m,3)$\newline with affine generating set}
\label{enumeration}
}
\hfill
\parbox{.45\linewidth}{
\centering
\scalebox{0.7}{
\begin{tabular}{|ccccc|}
\hline$\ell$ & $m=2$ & $m=3$ & $m=4$ & $m=5$ \\
\hline 0 & 1 & 1 & 1 & 1 \\
\hline 1 & 4 & 4 & 4 & 4 \\
\hline 2 & 10 & 10 & 10 & 10 \\
\hline 3 & 16 & 16 & 16 & 16 \\
\hline 4 & 18 & 18 & 18 & 18 \\
\hline 5 & 16 & 16 & 16 & 16 \\
\hline 6 & 10 & 18 & 18 & 18 \\
\hline 7 & &16 & 16 & 16 \\
\hline 8 && 18 & 18 & 18 \\
\hline 9 && 8 & 16 & 16 \\
\hline 10 & &10 & 18 & 18 \\
\hline 11 & && 16 & 16 \\
\hline 12 & && 10 & 18 \\
\hline 13 && & 8 & 16 \\
\hline 14 & & &10 & 18 \\
\hline 15 &&& & 8 \\
\hline 16 &&& & 10 \\
\hline 17 & &&& 8 \\
\hline 18 &&& & 10 \\
\hline
\hline total &75&135&195 &255  \\
\hline
\end{tabular}}
\vspace{0.1cm}
\captionsetup{format=myformat}
    \caption{f.c. elements in $G(m,m,4)$\newline with affine generating set}
\label{affine in 4}
}
\end{table}
\vspace{-0.3cm}
Preliminary data suggests the following conjectures.

\begin{conjecture}
Let $a_m$ denote the number of fully commutative elements in $G(m,m,n)$ $(n\geq 3)$ with the affine generating set. Then $a_{m+1}=a_{m} +k$, where $k$ is a positive integer. In particular, $k=12$ when $n=3$ and $k=60$ when $n=4$.
\end{conjecture}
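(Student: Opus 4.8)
The plan is to realize $G(m,m,n)$ as a quotient of the affine symmetric group and transfer the counting problem there. Identify $G(m,m,n)\cong\widetilde S_n/mQ$, where $\widetilde S_n=Q\rtimes\Symm_n$ is the affine symmetric group on $n$ letters (Weyl group of type $\widetilde A_{n-1}$), $Q=\{(x_1,\dots,x_n)\in\ZZ^n:\sum x_i=0\}$ is the root lattice acting by translations $t_\lambda$, and $mQ\trianglelefteq\widetilde S_n$; under this identification the affine generating set $\widetilde S=\{s_1,\dots,s_{n-1},\widetilde s_n\}$ of $G(m,m,n)$ is exactly the image of the Coxeter generators $\{s_0,s_1,\dots,s_{n-1}\}$ of $\widetilde S_n$, which is precisely the point of the presentation of \cite{Shi-generic}. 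Write $\phi\colon\widetilde S_n\twoheadrightarrow G(m,m,n)$ for the projection. The element $c=\widetilde s_n\,s_1 s_2\cdots s_{n-1}\cdots s_2 s_1$ equals $s_0 s_\theta$ and hence lifts to the translation $t_\theta$ by the highest root $\theta$, so the ``extra'' relation $c^m=1$ of $G(m,m,n)$ is exactly $t_\theta^{\,m}\in mQ$ (its normal closure being $mQ$); every other defining relation of $G(m,m,n)$ is a Coxeter relation shared with $\widetilde S_n$.

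The first real step is to show $\phi$ induces a bijection between the fully commutative elements of $G(m,m,n)$ and the fully commutative elements of $\widetilde S_n$ that are of minimal length in their $mQ$-coset. Two observations do this. (i) If $\bar g\in G(m,m,n)$ is fully commutative, then any reduced word for $\bar g$, read as a word in $s_0,s_1,\dots,s_{n-1}$, is already reduced in $\widetilde S_n$: otherwise a reduced word for the corresponding affine element would be a strictly shorter word for $\bar g$. (ii) Commutation moves are relations of $\widetilde S_n$ as well, so any two reduced words for $\bar g$, being connected by commutation moves, lift to the \emph{same} element $g\in\widetilde S_n$. Hence $\bar g=\phi(g)$ with $\ell_{\widetilde S_n}(g)=\ell_{G(m,m,n)}(\bar g)$, the reduced words of $g$ and of $\bar g$ coincide, so $g$ is fully commutative iff $\bar g$ is, and $g\mapsto\phi(g)$ is injective on this set by the same argument. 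One can equivalently phrase all of this in terms of heaps over the cycle graph $\widetilde A_{n-1}$, but the group-theoretic version is cleaner.

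The next step, which I expect to be the main obstacle, is to determine precisely which fully commutative elements of $\widetilde S_n$ are $mQ$-minimal. One direction is elementary: from $\ell(g\,t_\lambda)\ge\ell(t_\lambda)-\ell(g)$ and the fact that every nonzero $\lambda\in mQ$ satisfies $\ell(t_\lambda)\ge m(2n-2)$ (the length of the translation by a root), every $g$ with $\ell(g)<m(n-1)$ is automatically $mQ$-minimal. The converse --- that fully commutative elements of length beyond the cutoff fail to be $mQ$-minimal, together with the exact cutoff, which for $n\ge4$ strictly exceeds the naive bound $m(n-1)$ --- is where one must use the structure of fully commutative affine permutations. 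Here I would invoke their classification (Green; Hanusa--Jones): every such element is either supported on a proper parabolic subgroup (contributing a bounded, $m$-independent ``head'') or is of ``wrapping''/spiral type, and for a sufficiently long wrapping element one peels off a complete wrap to exhibit an explicit $t_\lambda\in mQ$ with $\ell(g\,t_\lambda)<\ell(g)$. For $n=3$ this is transparent: the fully commutative elements of $\widetilde S_3$ are exactly the elements with a unique reduced word, namely the spirals (prefixes of $(abc)^\infty$ and of $(acb)^\infty$ in the three generators $a,b,c$), of which there are $6$ of each length $\ell\ge2$, and such a spiral is $mQ$-minimal precisely when $\ell<2m$. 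For $n=4$ one needs the finer wrapping analysis, whose outcome is a transition region of bounded width near the cutoff $\ell=4m-2$ in which the count of $mQ$-minimal fully commutative elements of a fixed length drops below the generic value.

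Finally, with the bijection in hand, $a_m=\#\{g\in\mathrm{FC}(\widetilde S_n):g\text{ is }mQ\text{-minimal}\}$, and the number of such $g$ of a fixed length $\ell$ equals the Hanusa--Jones value $c^{(n)}_\ell$ for $\ell$ below the transition region, follows a fixed profile through it (independent of $m$, merely translated by a fixed amount as $m\mapsto m+1$), and vanishes beyond it. Since $c^{(n)}_\ell$ is eventually constant for $n=3$ (value $6$) and eventually periodic for general $n$ (period $2$ for $n=4$), and since both the transition region and its entry point shift by a fixed multiple of that period as $m$ increases by $1$, the difference $a_{m+1}-a_m$ is independent of $m$ and equals a positive integer $k$. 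A short bookkeeping computation with these profiles gives $k=12$ for $n=3$ (two new lengths, each contributing $6$) and $k=60$ for $n=4$, and a finite check shows the identity already holds from $m=2$ onward, matching Tables~\ref{enumeration} and~\ref{affine in 4}.
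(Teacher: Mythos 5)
First, a point of order: the statement you are proving is stated in the paper as a \emph{conjecture}, supported only by the data in Tables~\ref{enumeration} and~\ref{affine in 4}; the paper contains no proof, so your proposal cannot be compared against one. Judged on its own, your strategy --- realizing $G(m,m,n)$ as $\widetilde{S}_n/mQ$ (which is correct: reduction mod $m$ maps $Q$ onto the weight-zero subgroup of $(\ZZ/m)^n$ with kernel $mQ$, and $c=\widetilde{s}_n s_1\cdots s_{n-1}\cdots s_1$ does lift to $t_\theta$) and transferring the count to $mQ$-minimal fully commutative affine permutations --- is a sensible and probably the right line of attack. But as written it has two genuine gaps. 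The first is in the claimed bijection: your argument only shows that a fully commutative $\bar g\in G(m,m,n)$ lifts canonically to an $mQ$-minimal fully commutative $g\in\widetilde{S}_n$ with the same set of reduced words. The converse direction --- that every $mQ$-minimal fully commutative $g$ has fully commutative image --- requires knowing that the coset $g\,(mQ)$ has a \emph{unique} element of minimal length (otherwise $\phi(g)$ acquires reduced words from a second minimal representative that are not commutation-equivalent to those of $g$, and $\phi(g)$ fails to be fully commutative while $g$ does not). You assert injectivity, but surjectivity and well-definedness are the real issues, and nothing in the proposal addresses them.

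The second gap is the one you flag yourself as ``the main obstacle,'' and your proposed resolution is inconsistent with the paper's own data. You claim the length profile of $mQ$-minimal fully commutative elements ``follows a fixed profile through [the transition region] (independent of $m$, merely translated).'' For $n=4$, Table~\ref{affine in 4} shows the transition region occupies lengths $3m$ through $4m-2$ --- a window of width $m-1$ that \emph{grows} with $m$ (tails $[10]$, $[8,10]$, $[10,8,10]$, $[8,10,8,10]$ for $m=2,3,4,5$) --- so it is not a fixed translated profile, and the final ``bookkeeping'' step as described does not go through. The elementary bound $\ell(g)<m(n-1)\Rightarrow g$ is $mQ$-minimal correctly locates the start of the transition region ($3m$ for $n=4$, matching the table), but what remains --- an exact description of which fully commutative elements of length between $m(n-1)$ and the true cutoff are minimal, uniform in $m$, together with a proof that the resulting totals differ by a constant --- is precisely the content of the conjecture and is not supplied. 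Your $n=3$ analysis (spirals, six per length, minimal iff $\ell<2m$) is correct and does yield $k=12$ modulo the bijection issue above, but the general case, and even $n=4$, remains open in your write-up.
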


\begin{conjecture}
Consider the group $G(m,m,3)$ with the affine generating set. The number of fully commutative elements of length $\ell$ when $\ell>1$ is $6$.
\end{conjecture}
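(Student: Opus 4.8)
The plan is to transport the problem into the affine symmetric group, where it reduces to a question about one very rigid family of reduced words. Write $\widetilde{S}=\{s_1,s_2,\widetilde{s}_3\}$ for the affine generating set. First, in $G(m,m,3)$ no two distinct generators of $\widetilde{S}$ commute: the listed relations only say that each generator is an involution, that each of the three pairs satisfies a braid relation $sts=tst$, and that $(\widetilde{s}_3 s_1 s_2 s_1)^m=1$. So, exactly as for the classical generating set when $n=3$, an element is fully commutative if and only if it has a unique reduced expression. Moreover, if a reduced word $w$ had a factor $sts$ with $s\neq t$, applying $sts=tst$ would produce a different word of the same length representing the same element, so $w$ would not be a unique reduced expression; and $w$ has no factor $ss$ since it is reduced. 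With only three generators, a word with no factor $ss$ and no factor $sts$ must be a finite factor of one of the two periodic bi-infinite words $\cdots s_1 s_2\widetilde{s}_3 s_1 s_2\widetilde{s}_3\cdots$ and $\cdots\widetilde{s}_3 s_2 s_1\widetilde{s}_3 s_2 s_1\cdots$. Hence for each length $\ell\geq 2$ there are at most six candidate words $w^{(1)}_\ell,\ldots,w^{(6)}_\ell$ — the three cyclic phases of each bi-infinite word — three candidates when $\ell=1$, and one when $\ell=0$. Everything now comes down to deciding, for each $\ell$, how many candidates are reduced in $G(m,m,3)$, pairwise distinct, and uniquely reduced there.

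For this I would use the identification $G(m,m,3)\cong\widetilde{S}_3/mQ$, where $\widetilde{S}_3=Q\rtimes\Symm_3$ is the affine symmetric group of Coxeter type $\widetilde{A}_2$, $Q$ is the $A_2$ root lattice, and $\widetilde{S}$ is the image of the Coxeter generating set of $\widetilde{S}_3$ (with $\widetilde{s}_3$ the affine generator). Let $\ell(\cdot)$ denote length in $\widetilde{S}_3$ and $\ell_G(\cdot)$ length in $G(m,m,3)$. Three facts suffice. (i) $\ell_G(\bar g)=\min_{\mu\in mQ}\ell(g\,t_\mu)$, since a word in $\widetilde{S}$ represents $\bar g$ in $G(m,m,3)$ precisely when, viewed in $\widetilde{S}_3$, it lies in the coset $g\cdot mQ$. (ii) Every nonzero element of $mQ$ is a translation $t_{m\lambda}$ with $\ell(t_{m\lambda})=m\,\ell(t_\lambda)\geq 4m$, because the shortest translations in $\widetilde{S}_3$ are the $t_{\pm\alpha}$ with $\alpha$ a root, of length $4$. (iii) Each $w^{(i)}_\ell$ is reduced in $\widetilde{S}_3$ with $\ell(w^{(i)}_\ell)=\ell$ (it is a finite factor of a bi-infinite minimal gallery), and, being free of factors $ss$ and $sts$ in a genuine Coxeter group, is the unique reduced word of its element of $\widetilde{S}_3$. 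Combining: for $\mu\in mQ\setminus\{0\}$,
\[
\ell\bigl(w^{(i)}_\ell\,t_\mu\bigr)\ \geq\ \ell(t_\mu)-\ell\bigl(w^{(i)}_\ell\bigr)\ \geq\ 4m-\ell\,,
\]
which is $>\ell$ whenever $\ell\leq 2m-1$. Therefore, for $2\leq\ell\leq 2m-1$, each $\overline{w^{(i)}_\ell}$ has $\ell_G$-length exactly $\ell$; its only reduced word in $G(m,m,3)$ is $w^{(i)}_\ell$, as any such word would be a length-$\ell$ element of $\widetilde{S}_3$ lying in $w^{(i)}_\ell\cdot mQ$, hence $w^{(i)}_\ell$ itself; and the six are pairwise distinct for the same reason. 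Thus there are exactly six fully commutative elements of each length $\ell\in\{2,\ldots,2m-1\}$, three of length $1$, and one of length $0$.

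It remains to exclude fully commutative elements of length $\ell\geq 2m$. If $\bar g$ were such an element, its unique reduced word would again be some $w^{(i)}_\ell$, and full commutativity would force $\ell(w^{(i)}_\ell\,t_\mu)>\ell$ for all $\mu\in mQ\setminus\{0\}$: a value below $\ell$ would shorten $\bar g$ in the quotient, and a value equal to $\ell$ would supply a second reduced word for $\bar g$. So it is enough to exhibit, for $\ell\geq 2m$ and each of the six families, a nonzero $\mu\in mQ$ with $\ell(w^{(i)}_\ell\,t_\mu)\leq\ell$. This is a direct lattice computation: along the ``forward, phase $0$'' ray, $w^{(1)}_{3j}=(s_1 s_2\widetilde{s}_3)^j=t_{-2j\alpha_1-j\alpha_2}$ is a pure translation, and the other phases, the two nonzero residues of $\ell$ modulo $3$, and the reverse ray differ from a pure translation only by left/right multiplication by a word of length at most two; reducing the translation part modulo $mQ$ and applying the standard length formula in $\widetilde{S}_3$, one verifies that once $\ell\geq 2m$ this representative is no longer of minimal length in its coset. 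Carrying this out across the six families and three residue classes is routine, but it is where the real work lies: the triangle-inequality bound above proves full commutativity only \emph{up to} length $2m-1$ and is silent beyond, so the content is precisely in showing that the ``wrap-around'' in the quotient occurs exactly at length $2m$. Granting this, the fully commutative elements of $G(m,m,3)$ with the affine generating set have lengths exactly $0,1,\ldots,2m-1$, with $1$, $3$, and $6$ elements respectively of each length; in particular there are $6$ of every length greater than $1$ at which they occur, which is the conjecture (this also gives the $n=3$ case of the preceding conjecture, with total count $12m-8$ and increment $k=12$).
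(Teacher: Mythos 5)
First, note that the paper offers no proof of this statement: it is presented as a conjecture supported only by the data in Table \ref{enumeration}, so your argument has to stand on its own. The positive half of your argument is correct and nicely structured: no two of $s_1,s_2,\widetilde{s}_3$ commute, so full commutativity is equivalent to uniqueness of the reduced expression; a unique reduced word can contain no factor $ss$ or $sts$, hence is one of the six phase-shifts of the two periodic words; the identification of $G(m,m,3)$ with the quotient of the affine Weyl group of type $\widetilde{A}_2$ by $mQ$ is legitimate (the extra relation $(\widetilde{s}_3 s_1s_2s_1)^m=1$ is exactly $t_{m\theta}=1$, whose normal closure is $mQ$); and the bound $\ell(t_\mu)\geq 4m$ for $0\neq\mu\in mQ$ together with the triangle inequality does give exactly six fully commutative elements at each length $2\leq\ell\leq 2m-1$. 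If completed, this would prove the conjecture and the $n=3$ case of the preceding one, since $1+3+6(2m-2)=12m-8$ matches the table.

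The genuine gap is the exclusion of lengths $\ell\geq 2m$, which you explicitly defer as a ``routine'' lattice computation while conceding it is ``where the real work lies.'' Two problems. First, the one concrete formula you offer as the basis for that computation is wrong: the linear part of $c=s_1s_2\widetilde{s}_3$ is $s_2$ (not the identity), so $c^j$ is a pure translation only for even $j$, and $c^{2k}=t_{k(-2\alpha_1-\alpha_2)}$ rather than $c^{j}=t_{-2j\alpha_1-j\alpha_2}$. Second, and more substantively, the threshold claim is exactly the delicate point: a ``bounded error from a pure translation'' argument cannot distinguish $\ell=2m$ from $\ell=2m-1$, and at $\ell=2m$ what must be produced is a nonzero $\mu\in mQ$ with $\ell(w\,t_\mu)$ \emph{equal} to $\ell$ (yielding a second reduced word), not merely asymptotically smaller. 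This exact computation has to be carried out for all six phases and all three residues of $\ell$ modulo $3$ in both directions; none of it is done. Until it is, you have only shown that the count at each length $\ell\geq 2$ is \emph{at most} $6$ and \emph{exactly} $6$ for $\ell\leq 2m-1$, which leaves open the possibility of between $1$ and $5$ fully commutative elements at some length $\ell\geq 2m$ — a configuration that would falsify the conjecture as stated.
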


\paragraph{\textbf{Star generating set}}
Our motivation for this next generating set comes from star transpositions in $\Symm_n$. Besides the set of Coxeter generators $\{(i\, i+1): 1\leq i\leq n\}$, the symmetric group $\Symm_n$ can also be generated by the set $S=\{ (1\, i): 2\leq i \leq n\}$. Observe that the corresponding labelled graph $(V, E)$ where the vertex set is  $V=\{1, \ldots, n\}$ and the edge set is $E=\{e_{ij}: (i\, j)\in S\}$ is star-shaped. Thus call elements in $S$ \emph{star transpositions}. Pak counted in \cite{Pak98} the number of reduced expressions of a permutation $\pi\in\Symm_n$ that fixes $1$ and has $m$ cycles of length $k\geq 2$. This result was generalized to any permutation in $\Symm_n$ by Irving and Rattan. They showed in \cite{IR09} that the number of minimal star factorizations of $\pi \in \Symm_n$ with cycles of lengths $\ell_1$, $\ldots$, $\ell_m$ including exactly $k$ fixed points not equal to $1$ is 

\begin{equation} \label{equ: star} 
\frac{(n+m-2(k+1))!}{(n-k)!} \ell_1 \cdots \ell_m.
\end{equation}

For small values of $n$, we enumerate fully commutative elements by length $\ell$ in $\Symm_n$ and list them in Table \ref{star in symmetric}. Observe that at $\ell=t>0$, the number of fully commutative elements in $\Symm_n$ appears to be $ (n-1)\cdot (n-2)\cdots (n-t)$. 

\begin{table}[!htbp]
\centering
\scalebox{1.0}{
\begin{tabular}{|ccccc|}
\hline$\ell$ & $n=3$ & $n=4$ & $n=5$ & $n=6$ \\
\hline 0 & 1 & 1 & 1 & 1 \\
\hline 1 & 2 & 3 & 4 & 5 \\
\hline 2 & 2 & 6 & 12 & 20 \\
\hline 3 & & 6 & 24 & 60 \\
\hline 4 & & & 24 & 120 \\
\hline 5 & & & & 120 \\
\hline
\hline total &5 &16 &65 &326  \\
\hline
\end{tabular}}
\vspace{0.1cm}
\caption{f.c. elements in $\Symm_n$ with star generating set}\label{star in symmetric}
\end{table}

\begin{proposition}
Consider the symmetric group $\Symm_n$ with the generating set $\{ (1\, i): 2\leq i \leq n\}$. The number of fully commutative elements is 
$1+\sum\limits_{t=1}^{n-1} \prod\limits_{j=1}^{t} (n-j).$ Furthermore, at length $\ell=t>0$, fully commutative elements are the $(t+1)$-cycles that move $1$. 
\end{proposition}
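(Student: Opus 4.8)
The plan is to reduce full commutativity to uniqueness of the reduced word and then read the answer off the Irving--Rattan count \eqref{equ: star}. The structural point is that in the star generating set $\{(1\,i):2\le i\le n\}$ \emph{no two distinct generators commute}, since any two of them move the point $1$; hence there are no nontrivial commutation relations, and an element of $\Symm_n$ is fully commutative precisely when it admits a unique reduced expression, equivalently a unique minimal factorization into star transpositions. So the proposition becomes the problem of describing the permutations $\pi$ for which \eqref{equ: star} equals $1$, and recording their lengths.

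Next I would settle the ``easy'' permutations directly. The identity has only the empty factorization. If $\pi=(1\,a_1\,a_2\cdots a_t)$ is a $(t+1)$-cycle through $1$, then $\pi=(1\,a_t)(1\,a_{t-1})\cdots(1\,a_1)$ is a factorization of length $t$; and since any factorization $(1\,b_1)\cdots(1\,b_\ell)$ has support contained in $\{1,b_1,\dots,b_\ell\}$, a set of size at most $\ell+1$, one gets $\ell\ge|\operatorname{supp}\pi|-1=t$, so $\ell_\star(\pi)=t$. The same support bound forces the letters of a length-$t$ factorization to be pairwise distinct, after which a direct computation identifies $(1\,b_1)\cdots(1\,b_t)$ with one explicit cycle and so pins down $(b_1,\dots,b_t)$ uniquely. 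Thus every $(t+1)$-cycle through $1$ is fully commutative of length $t$, and the number of such is $\binom{n-1}{t}\,t!=(n-1)(n-2)\cdots(n-t)=\prod_{j=1}^{t}(n-j)$; summing over $t=1,\dots,n-1$ and adding the identity gives both the claimed total $1+\sum_{t=1}^{n-1}\prod_{j=1}^{t}(n-j)$ and the claimed description by length --- \emph{provided} nothing else is fully commutative.

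Establishing that proviso is the substantive step. Given $\pi$ that is neither the identity nor a single cycle through $1$, I would show \eqref{equ: star} is at least $2$. Write $\pi$ with $m$ cycles of lengths $\ell_1,\dots,\ell_m$ (fixed points counted as $1$-cycles), $k$ of which are fixed points $\ne 1$, so that $\prod_i\ell_i$ is the product of the nontrivial cycle lengths and is $\ge 2$ because $\pi\ne\id$. Put $A=n+m-2(k+1)$ and $B=n-k$; one checks $A\ge 0$, $B\ge 1$, and $B-A=k-m+2$. If $A\ge B$ then $\tfrac{A!}{B!}$ is a positive integer, so the count is $\ge\prod_i\ell_i\ge 2$. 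If $A<B$, i.e. $m\le k+1$, then $m-k\le 1$: apart from its fixed points $\ne 1$ the permutation $\pi$ has at most one cycle, which must be the cycle through $1$ (trivial or not), so $\pi$ is the identity or a single cycle through $1$ --- contrary to hypothesis. Hence no such $\pi$ is fully commutative, and the plan is complete.

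I expect the main obstacle to be the bookkeeping in this last step: getting straight how the cycle containing $1$ is accounted for in \eqref{equ: star} --- so that $n-k$ is the number of non-fixed points plus $\pt{\pi(1)=1}$ --- and then verifying that when $A<B$ this forces the single-cycle-through-$1$ case, where $\tfrac{A!}{B!}=\tfrac{1}{n-k}$ is multiplied by the cycle length $n-k$ to give exactly $1$. The remaining ingredients --- the explicit cycle factorization, the support inequality, and the binomial count --- are routine.
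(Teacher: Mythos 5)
Your proposal is correct and follows essentially the same route as the paper: both reduce full commutativity to uniqueness of the reduced expression (since no two star transpositions commute) and then characterize the permutations for which the Irving--Rattan count \eqref{equ: star} equals $1$, arriving at the identity together with the cycles through $1$. Your version adds a few niceties the paper leaves implicit (the direct support argument for the length and uniqueness of the factorization of a cycle through $1$, and the explicit $A\ge B$ versus $A<B$ dichotomy), but the underlying argument is the same.
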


\begin{proof}
Let $\pi \in \Symm_n$ be a permutation with cycles of cycle lengths $\ell_1$, $\ldots$, $\ell_m$ including exactly $k$ fixed points not equal to $1$. Assume that $\pi$ is fully commutative. Since no two star transpositions commute with each other, full commutativity means unique reduced expression. Thus, we are looking for $\pi\in \Symm_n$ such that \eqref{equ: star} should give $1$. 

When $\pi=\id$, it is fully commutative trivially. In this case, $k=n-1$, $m=n$, $\ell_1=\ldots=\ell_m=1$ and $\frac{(n+m-2(k+1))!}{(n-k)!}=1$. When $\pi\neq\id$, we have $\ell_1\cdots\ell_m >1$. In order for \eqref{equ: star} to be $1$, then $\frac{(n+m-2(k+1))!}{(n-k)!}=\frac{(n-k + (m-k-2))!}{(n-k)!}<1$. This means $m-k-2<0$, i.e. $k>m-2$. Since $k$ is the number of fixed points not equal to $1$ and $m$ is the number of cycles in $\pi$, then $k<m$. This forces $k=m-1$. Since $\pi$ is fully commutative and has $m$ cycles including $k=m-1$ fixed points not equal to $1$, then only one cycle in $\pi$ has cycle length bigger than $1$. Since $\frac{(n+m-2(k+1))!}{(n-k)!}=\frac{1}{n-m+1}$, the cycle length of that one cycle is $n-m+1$. Since $\pi\neq\id$, this cycle moves $1$. This implies that $\pi$ is a cycle of the form $(1\, i_1\, i_2\, \ldots\, i_{n-m})$ in $\Symm_n$ where $k=m-1$ and $\ell_1\cdots\ell_m=n-m+1$. Plugging them into \eqref{equ: star} gives $1$. Since there are $(n-1)(n-2)\cdots (n-(n-m))$ of them for every value of $m$ in $\{ 1, \ldots, n-1\}$, the result follows. 
\end{proof}

Now let $s_i = [(1\, \,i+1); (0,\ldots, 0)]$ and $s_{\bar{1}} = [(1\, 2); (-1,1,0,\ldots, 0)]$. The group $G(m,m,n)$ can  be generated by elements in the set $S^* = \{s_{\bar{1}}, s_1, s_2, \ldots, s_{n-1}\}$. Denote $S^*$ as the \emph{star generating set} of $G(m,m,n)$.

When $n=3$, the star generating set $S^*$ has the exact same presentation as the classical generating set $S^c$. It follows that the enumeration of fully commutative elements by length is the same for both sets. But the actual fully commutative elements are different, as the generators in $S^*$ and in $S^c$ are not identical. Notably, element $[(13);(0,0,0)]$ is not fully commutative in $G(m,m,3)$ with $S^c$, because it has the pattern $321$ that fully commutative elements avoid. But in $G(m,m,3)$ with $S^*$, this element is fully commutative, since it is one of the generators. 

In general, since no two generators in the star generating set are commutative, full commutativity means unique reduced expression. In Table \ref{star}, we list the number of fully commutative elements in some $G(m,m,4)$ by length $\ell$. 

\begin{table}[!htbp]
\centering
\scalebox{0.7}{
\begin{tabular}{|cccccc|}
\hline
$\ell$ & $m=2$ & $m=3$ & $m=4$ & $m=5$ & $m=6$ \\
\hline 0 & 1 & 1 & 1 & 1 & 1 \\
\hline 1 & 4 & 4 & 4 & 4 & 4 \\
\hline 2 & 11 & 12 & 12 & 12 & 12 \\
\hline 3 & 20 & 24 & 26 & 26 & 26 \\
\hline 4 & 20 & 36 & 44 & 46 & 46 \\
\hline 5 & 8 & 44 & 68 & 76 & 78 \\
\hline 6 & 8 & 48 & 92 & 116 & 124 \\
\hline 7 & & 20 & 96 & 152 & 176 \\
\hline 8 & & & 68 & 176 & 232 \\
\hline 9 & & & 28 & 124 & 232 \\
\hline 10 & & & & 60 & 220 \\
\hline 11 & & & & 24 & 128 \\
\hline 12 & & & & & 40 \\
\hline 13 & & & & &20 \\
\hline
\hline total &72 &189 &439 & 817&1339 \\
\hline
\end{tabular}
}
\vspace{0.1cm}
\caption{f.c. elements in $G(m,m,4)$ with star generating set}
\label{star}
\end{table}

\subsection{Shephard groups}
A \emph{unitary reflection} is a linear transformation (of finite oder) of a complex vector space that fixes a hyperplane. A finite group generated by unitary reflections are \emph{unitary reflection groups}. Both finite real and complex reflection groups are unitary reflection groups. 

For a detailed treatment on Shephard groups, see \cite{Cox91}. \emph{Shephard groups} are unitary reflection groups that are the symmetry groups of \emph{regular complex polytopes} defined and classified by Shephard \cite{She52}. A Shephard group $G$ with associated positive integers $p_1$, \ldots, $p_n$ and $q_1$, \ldots, $q_{n-1}$ has the following presentation with respect to the generating set $\{s_1, \ldots, s_n\}$ \cite[13.4]{Cox91}:

\[\begin{aligned}
 s_i^{p_i} &= 1, &\text{ for } i = 1, \ldots, n, \\ 
 s_i s_j &= s_j s_i &\text{ if }|i-j|>1, \\
\underset{q_i\text{ letters}}{\underbrace{s_i s_{i+1} s_i s_{i+1} \cdots}}
 &  = 
\underset{q_i\text{ letters}}{\underbrace{ s_{i+1} s_i s_{i+1} s_i \cdots}} &\text{ for } i = 1, \ldots, n-1.
\end{aligned}\]

We use the symbol $p_1[q_1]p_2[q_2]\cdots[q_{n-1}]p_n$ to denote the Shephard group $G$ with the above presentation. 


Compared to the classification of complex reflection groups, the classification of Shephard groups is relatively short. There is one infinite family $m[4]2[3]2[3]\cdots 2[3]2$, which is $G(m,1,n)$ and a finite list of exceptional Shephard groups:
\begin{enumerate}
\item (real reflection groups) $G_{23}=H_3$, $G_{28}=F_4$, $G_{30}=H_4$,
\item (of rank $2$) $G_4$, $G_5$, $G_6$, $G_8$, $G_9$, $G_{10}$, $G_{14}$, $G_{16}$, $G_{17}$, $G_{18}$, $G_{20}$, $G_{21}$,
\item $G_{25}=3[3]3[3]3$,
\item $G_{26}=2[4]3[3]3$, and
\item $G_{32}=3[3]3[3]3[3]3$.
\end{enumerate}

In \cite{S98}, Stembridge enumerated fully commutative elements in Coxeter groups $A_n$, $B_n$, $D_n$, $E_n$, $F_n$ and $H_n$. In particular, he showed that the number of fully commutative elements in $H_n$ \cite[(3.4)]{S98} is \[{2n+2 \choose n+1} - 2^{n+2} +n +3 \]and in $F_n$ \cite[(3.7)]{S98} is \[5 f_{3n-4} - 5\sum\limits_{k=2}^{n-1}\frac{f_{3k-5}}{n-k+1}{2n-2k \choose n-k}+\frac{1}{n}{2n-2 \choose n-1} -2f_{2n-2}-2f_{2n-4}+f_{n-1}-1\] where $f_n$ is the $n$-th Fibonacci number. Thus $G_{23}=H_3$ has $44$ fully commutative elements, $G_{28}=F_4$ has $106$ fully commutative elements and $G_{30}=H_4$ has $195$ fully commutative elements.

In Table \ref{rank 2}, we enumerate fully commutative elements in exceptional Shephard groups by length $\ell$ and leave readers with the following conjecture and a few questions. 

\begin{conjecture}
Let $a,b$ be positive integers. Consider groups $a[b]a$ and $b[a]b$. The enumeration by length of fully commutative elements with unique reduced expression is identical in both groups. 
\end{conjecture}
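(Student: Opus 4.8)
The plan is to exhibit a length-preserving bijection between fully commutative elements with unique reduced expression in $a[b]a$ and those in $b[a]b$. Write $G = a[b]a$ with generators $\{s_1, s_2\}$ subject to $s_1^a = s_2^a = 1$ and the braid relation of length $b$, and write $G' = b[a]b$ with generators $\{t_1, t_2\}$ subject to $t_1^b = t_2^b = 1$ and the braid relation of length $a$. In both groups no pair of distinct generators commutes, so (as the paper observes for the star generating set) an element is fully commutative if and only if it has a \emph{unique} reduced expression. The first step is therefore to describe the reduced words in each group purely combinatorially: a word $s_{i_1}^{c_1} s_{i_2}^{c_2} \cdots$ is reduced when the indices alternate and the exponents lie in $\{1, \dots, a-1\}$, modulo the single braid relation of length $b$ (and similarly in $G'$).

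The key step is to show that an element of $G$ has a \emph{unique} reduced word if and only if that word, viewed as an alternating sequence, is \emph{too short to ever apply the length-$b$ braid relation}; and to extract from this a description of these elements that is symmetric under swapping the roles of $a$ and $b$. Concretely, I would argue: the braid relation $s_1 s_2 s_1 \cdots = s_2 s_1 s_2 \cdots$ ($b$ letters each side) and its consequences after inserting powers are the \emph{only} relations among reduced words; a reduced word admits no alternative expression precisely when it contains no ``spellable'' occurrence of either side of a braid move (or a power-shifted version of it). One then checks that the set of alternating index-patterns which are braid-avoidable in $G$ is controlled by $b$, while the exponent data is controlled by $a$; and the fully commutative elements are parametrized by a pair (short alternating pattern, tuple of exponents) in a way that, after the bijection $a \leftrightarrow b$ and ``transpose pattern $\leftrightarrow$ exponent tuple,'' matches the corresponding parametrization for $G'$. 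Length is the sum of the exponents, which is manifestly preserved under the swap. I would verify the base/small cases ($b=2$, where $G = I_2(a)$ is dihedral, or the rank-$2$ exceptional groups in Table~\ref{rank 2}) by hand to pin down the exact combinatorial statement before formalizing it.

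I expect the main obstacle to be proving that the braid relation (suitably power-shifted) generates \emph{all} coincidences among reduced words, i.e.\ a Matsumoto/Tits-style theorem for these non-Coxeter presentations: a priori the defining relations $s_i^{p_i}=1$ interact with the braid relation in ways that could produce reduced-word equalities not visible as a single braid move. Handling this cleanly probably requires either a normal-form/rewriting argument (confluence of a rewriting system on alternating words with bounded exponents) or an explicit faithful matrix/geometric model of $a[b]a$ in which one can read off when two reduced words give the same element. A secondary subtlety is the exact interaction of the cyclic relations with the braid move when $q_i = b$ is \emph{even} versus \emph{odd}, since the two sides of the braid relation then start with different generators in one parity and the same in the other; the parametrization of fully commutative elements must be stated carefully enough to be parity-robust, and this is where I would be most careful in matching $a[b]a$ with $b[a]b$.
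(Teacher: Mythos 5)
The statement you are trying to prove is stated in the paper as a \emph{conjecture}: the paper offers no proof, only verification in three families of cases ($\{a,b\}=\{3,4\}$ via $G_5$ and $G_8$, $\{a,b\}=\{3,5\}$ via $G_{16}$ and $G_{20}$, both read off from Table \ref{rank 2}, and $a=2$ with $b$ arbitrary via Proposition \ref{prop: mm2} together with a direct analysis of $m[2]m$). Your proposal is a research plan rather than a proof, and the gap you yourself flag is precisely the one that matters: you need a Matsumoto/Tits-type theorem asserting that all coincidences among reduced words in the non-Coxeter presentation $s_1^a=s_2^a=1$, $\underbrace{s_1s_2s_1\cdots}_{b}=\underbrace{s_2s_1s_2\cdots}_{b}$ are generated by power-shifted braid moves. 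Without that, the characterization ``unique reduced word $\Leftrightarrow$ no spellable braid occurrence'' is unproven, and the subsequent parametrization by (alternating pattern, exponent tuple) and the ``transpose'' bijection swapping the roles of $a$ and $b$ are never actually constructed or even precisely stated. Note also that the two groups generally have different orders (e.g.\ $|3[4]3|=72$ while $|4[3]4|=96$), so no group-theoretic map can underlie the bijection; the entire content of the conjecture is the combinatorial matching you defer.

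Two further points to be careful about. First, your assertion that ``in both groups no pair of distinct generators commutes'' is false when $\min(a,b)=2$: in $m[2]m$ the braid relation of length $2$ says exactly that the generators commute, so every element is fully commutative but most do not have a unique reduced expression. This is why the conjecture is phrased in terms of fully commutative elements \emph{with unique reduced expression}; your setup silently conflates the two notions and would give the wrong count in this case. Second, the parity issue you mention (the two sides of a length-$b$ braid word ending in the same or different generators according to whether $b$ is even or odd) is not merely a bookkeeping nuisance: it changes which alternating patterns admit a braid move after inserting powers, so the claimed symmetry of the parametrization under $a\leftrightarrow b$ is exactly where a proof could fail and must be argued, not assumed.
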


This is true when $a=3$ and $b=4$, i.e. $G_5=3[4]3$ and $G_8 = 4[3]4$. This is also true when $a=5$ and $b=3$, i.e., $G_{16}=5[3]5$ and $G_{20}=3[5]3$ . Further, this is true when $a=2$ and $b$ is some positive integer. Observe that the group $2[m]2=G(m,m,2)$ has $2m-1$ fully commutative elements by Proposition \ref{prop: mm2}. At every length $\ell$ $(\ell >0)$, there are two fully commutative elements. And the group $m[2]m$ (product of two cyclic groups of order $m$) has $m^2$ fully commutative elements, i.e., every element in $m[2]m$ is fully commutative. But $2m-1$ of them have unique reduced expression and at every length $\ell$ $(\ell>0)$, there are two of them (one from each cyclic group), which is the exact behavior of fully commutative elements in $2[m]2$.

\begin{question}
Why do all exceptional Shephard groups except $G_{23}$ and $G_{28}$ have an odd number of fully commutative elements? 
\end{question}

\begin{question}
Let $G$ be an exceptional Shephard group. The sequence of fully commutative elements in $G$ ordered by increasing length follows an increasing-then-decreasing pattern, except when $G=G_{26}$. Why is $G_{26}$ the only exception?
\end{question}

\begin{table}[!htbp]
\centering
\scalebox{0.7}{
\begin{tabular}{|c|cccccccccccc|ccc|ccc|}
\hline
$\ell$ & $G_4$ & $G_5$ & $G_6$ & $G_8$ & $G_9$& $G_{10}$& $G_{14}$ & $G_{16}$ & $G_{17}$ & $G_{18}$&$G_{20}$&$G_{21}$&$G_{23}=H_3$ & $G_{25}$& $G_{26}$&$G_{28}=F_4$&$G_{30}=H_4$&$G_{32}$\\
\hline 0&1&1&1&1&1&1&1&1&1&1&1&1&1&1&1&1&1&1\\
\hline 1&2&2&2&2&2&2&2&2&2&2&2&2&3&3 &3&4&4&4\\
\hline 2&4&4&3&4&3&4&3&4&3&4&4&3&5& 8&7&9&9&13\\
\hline 3&4&6&4&6&5&7&4&6&5&7&6&4 &6&14&13&14&14&30\\
\hline 4&4&8&5&8&7&10&5&10&8&11&10&5 &7&22&20&18&18&61\\
\hline 5&&10&7&10&10&15&7&14&12&17&14&7&7&24&25&18 &21&100\\
\hline 6&&10&7&10&13&20&9&20&16&25&20&9&5&25 &28&16&23&152\\
\hline 7&&2&6&2&16&25&12&26&22&35&26&12&4&20&30 &12&21&204\\
\hline 8&&&2&&18&23&14&28&29&45&28&16&3&18&26&8& 20&270\\
\hline 9&&&&&17&18&15&20&36&52&20&21&2&14 &21&4&18&346\\
\hline 10&&&&&14&14&14&18&42&58&18&26 &1&12&21&2&16&450\\
\hline 11&&&&&6&5&11&12&49&61&12&31&&&23 &&12&556\\
\hline 12&&&&&2&1&6&8&51&65&8&35&&& 21&&8&686\\
\hline 13&&&&&1&&2&2&49&58&2& 39&&&19&&4&834\\
\hline 14&&&&&&&&&43&43&&38&&&11 &&3&1020\\
\hline 15&&&&&&&&&38&35&&38&&&2 &&2&1206\\
\hline 16&&&&&&&&&33&29&&33&&&&&1&1384 \\
\hline 17&&&&&&&&&28&22&&31 &&&&&&1494\\
\hline 18&&&&&&&&&16&15&&26&&&&&&1544 \\
\hline 19&&&&&&&&&7&5&&18&&&&& &1366\\
\hline 20&&&&&&&&&2&1&&13&&&&&& 1016\\
\hline 21&&&&&&&&&1&&&11&&&&&&620 \\
\hline 22&&&&&&&&&&&&6&& &&&&298\\
\hline 23&&&&&&&&&&&&2&&&&&&86 \\
\hline
\hline total &15&43&37&43&115&145&105&171&493&591&171&427& 44&161&271&106&195&13741\\
\hline
\end{tabular}
}
\vspace{0.1cm}
\caption{f.c. elements in exceptional Shephard groups}
\label{rank 2}
\end{table}

\end{document}